\renewcommand{\algorithmicrequire}{\textbf{Input:}}
\renewcommand{\algorithmicensure}{\textbf{Output:}}
\pgfplotsset{compat = 1.13,
	colormap name = viridis,
	unbounded coords = jump}
\tikzset{every picture/.style={/utils/exec={\normalfont}}}
\definecolor{myRed}{HTML}{E34A33}
\definecolor{myBlue}{HTML}{0571B0}
\definecolor{myBrown}{HTML}{A6611A}
\def\IC{{\mathbb C}}
\def\IL{{\mathbb L}}
\def\IT{{\mathbb T}}
\newcommand{\bA}{{\mathbf A}}
\newcommand{\bG}{{\mathbf G}}
\newcommand{\bK}{{\mathbf K}}
\newcommand{\bM}{{\mathbf M}}
\newcommand{\bI}{{\mathbf I}}
\newcommand{\bX}{{\mathbf X}}
\newcommand{\bx}{{\mathbf x}}
\newcommand{\bu}{{\mathbf u}}
\newcommand{\bU}{{\mathbf U}}
\newcommand{\bc}{{\mathbf c}}
\newcommand{\bfe}{{\mathbf e}}
\newcommand{\bb}{{\mathbf b}}
\newcommand{\bh}{{\mathbf h}}
\newcommand{\bv}{{\mathbf v}}
\newcommand{\bw}{{\mathbf w}}
\newcommand{\bfz}{{\mathbf 0}}
\newcommand{\cN}{ {\cal N} }
\newcommand{\bPhi}{ \boldsymbol{\Phi} }
\newcommand{\bhA}{\hat{\mathbf A}}
\newcommand{\bhb}{\hat{\mathbf b}}
\newcommand{\bhc}{\hat{\mathbf c}}
\def\iop{{\textrm i}}
\theoremstyle{plain}\newtheorem{theorem}{Theorem}
\theoremstyle{plain}\newtheorem{corollary}{Corollary}
\theoremstyle{plain}\newtheorem{proposition}{Proposition}
\theoremstyle{plain}\newtheorem{lemma}{Lemma}
\theoremstyle{definition}\newtheorem{remark}{Remark}
\newcommand{\eqdef}{\xlongequal{\text{def}}}
\newcommand{\hh}{\hat{h}}
\newcommand{\bhh}{\hat{\bh}}
\newcommand{\s}{{\xi}}
\newcommand{\sh}{\hat{\s}}
\newcommand{\bolds}{\boldsymbol{\s}}
\newcommand{\boldsh}{\hat{\bolds}}
\newcommand{\lqoaaa}{\textsf{AAA-LQO}\xspace}
\newcommand{\aaa}{\textsf{AAA}\xspace}
\newcommand{\ls}{\textsf{LS}\xspace}
\newcommand{\lqo}{\textsf{LQO}\xspace}
\def\J{{\mathcal J}}
\def\Lonetwo{\IL^{\hspace{-0.5ex}\mbox{\scriptsize{(1,2)}}}}
\def\Ltwoone{\IL^{\hspace{-0.5ex}\mbox{\scriptsize{(2,1)}}}}
\def\Ltwod{\IL^{\hspace{-0.5ex}\mbox{\scriptsize{\textrm (2,2)}}}}
\def\got{\hat{\bh}^{\hspace{0.1ex}\mbox{\scriptsize{\textrm (1,2)}}}}
\def\gto{\hat{\bh}^{\hspace{-0.0ex}\mbox{\scriptsize{\textrm (2,1)}}}}
\def\gtt{\hat{\bh}^{\hspace{-0.0ex}\mbox{\scriptsize{\textrm (2,2)}}}}
\def\hij{\hat{h}_{i,j}^{\hspace{-0.0ex}\mbox{\scriptsize{\textrm (2,2)}}}}
\def\gij{\hat{h}_{i,j}^{\hspace{-0.0ex}\mbox{\scriptsize{\textrm (1,2)}}}}
\def\fji{\hat{h}_{j,i}^{\hspace{-0.0ex}\mbox{\scriptsize{\textrm (2,1)}}}}
\begin{document}
  

\title{Data-driven modeling of linear dynamical systems with quadratic output in the AAA framework}
  
\author[$\ast$]{Ion Victor Gosea}
\affil[$\ast$]{Max Planck Institute for Dynamics of Complex Technical Systems,
	Sandtorstr. 1, 39106 Magdeburg, Germany.\authorcr
  \email{gosea@mpi-magdeburg.mpg.de}, \orcid{0000-0003-3580-4116}}
  
\author[$\dagger$]{Serkan Gugercin}
\affil[$\dagger$]{Department of Mathematics and Computational Modeling and Data
	Analytics Division, Academy of Integrated Science, Virginia Tech, Blacksburg,
	VA 24061, USA.\authorcr
  \email{gugercin@vt.edu}, \orcid{0000-0003-4564-5999}}
  
\shorttitle{The AAA framework for linear dynamical systems with quadratic output}
\shortauthor{I. V. Gosea, S. Gugercin}
\shortdate{}
  
\keywords{\small data-driven modeling, nonlinear dynamics, rational approximation, barycentric form, interpolation. \normalsize}

  
\abstract{%
We extend the \aaa (Adaptive-Antoulas-Anderson) algorithm to develop a data-driven modeling framework for linear systems with quadratic output (\lqo). Such systems are characterized by two transfer functions: one corresponding to the linear part of the output and another one to the quadratic part. We first establish the joint barycentric representations and the  interpolation theory  for the two transfer functions of \lqo systems. This analysis leads to the proposed \lqoaaa algorithm. We show that by interpolating the transfer function values on a subset of samples together with imposing a least-squares minimization on the rest, we construct reliable data-driven \lqo models. Two numerical test cases illustrate the efficiency of the proposed method.}
  
\maketitle

  
\section{Introduction}
\label{sec:intro}

Model order reduction (\textsf{MOR}) is used to approximate large-scale dynamical systems with  smaller ones that ideally have similar response characteristics to the original one. This has been an active research area and  many approaches to \textsf{MOR} have been proposed. We refer the reader to  \cite{ACA05,BBF14,AntBG20,quarteroni2015reduced,siammorbook2017,SCARCIOTTI17}
and the references therein  for an overview of \textsf{MOR} methods for both linear and nonlinear dynamical systems.

\textsf{MOR}, as the name implies, assumes access to a full order model to be reduced; in most cases, in the form of a state-space formulation obtained via, e.g., a spatial discretization of the underlying partial differential equations. Then, the reduced order quantities are computed via an explicit projection of the full-order quantities. However, in some cases, access to full order  dynamics is not available. Instead, one has  access to a collection of input/output measurements. In this case, the goal is to construct the approximation directly from data, which we refer to as data-driven modeling. This is the framework we consider in this paper.

Specifically, we focus on data-driven modeling  of linear dynamical systems with quadratic output (\lqo). In our formulation, data correspond to  frequency domain samples of the input/output mapping of the underlying \lqo system, in the form of samples  of its two transfer functions; the first transfer function being a single-variable one and the second a bivariate one.
For this data set, the proposed framework first develops the barycentric rational interpolation theory  for \lqo systems to interpolate a subset of the data and  and then 
extends the \aaa algorithm \cite{NST18} to this setting by minimizing a least-square measure in the remaining data.

We note that system identification of general nonlinear systems has been a popular topic. In particular, we mention here the special case of identifying linear systems with nonlinear output or input functions, e.g., the so-called Wiener \cite{Wiener58} and Hammerstein models, respectively.  Significant effort has been allocated for identification of such models; see, e.g., \cite{juditsky95}, \cite{giri10} and the references therein. Nevertheless, the methods previously mentioned are based in the time domain, while in this paper we focus on frequency domain data. We point out that the frequency-data based Loewner framework was recently extended to identifying Hammerstein models in \cite{morKarGA21}.

The rest of the paper is organized as follows: 
We discuss \lqo systems and their transfer functions
in \Cref{sec:lqo},  followed by a review of barycentric rational approximation and the \aaa algorithm in \Cref{sec:aaa}. Next, we develop the theory for
barycentric representation and multivariate interpolation for \lqo systems in \Cref{sec:lqobary}. 
Based on this analysis, in
\Cref{sec:prop_meth}, we present 
the proposed algorithm, \lqoaaa,  for data-driven modeling of \lqo systems.
The numerical experiments are given in \Cref{sec:numerics} followed by the conclusions  in \Cref{sec:conc}.

\section{Linear systems with quadratic output}
\label{sec:lqo} 
In state-space form,  linear dynamical systems with quadratic output (\lqo systems) are  described as
\begin{align}\label{eq:def_LQO}
\Sigma_{\lqo}: \begin{cases}  \dot\bx(t)=\bA\bx(t)+\bb\hspace{0.25ex} u(t),\\
\hspace{0.5mm} y(t)=\bc^T\bx(t)
+{\bK \big{[} \bx(t) \otimes \bx(t) \big{]}},
\end{cases}
\end{align}
where  $\bA \in \mathbb{R}^{\cN \times \cN}$, $\bb, \bc \in \mathbb{R}^{\cN}$, $\bK \in \mathbb{R}^{1 \times \cN^2}$, 
and the symbol $\otimes$ denotes the Kronecker product, i.e., for the vector $\bx = [x_1 \ x_2 \ \cdots \ x_\cN]^T \in \mathbb{R}^{\cN^2}$, we have
$$
\bx \otimes \bx = [x_1^2 \ \ x_1 x_2 \ \ x_1 x_3 \ \ \cdots \ \ x_1 x_\cN \ \ \cdots x_\cN^2]^T \in \mathbb{R}^{\cN^2}.
$$
The quadratic part of the output in 
\cref{eq:def_LQO}, 
$\bK \big{[} \bx(t) \otimes \bx(t) \big{]}$,  can be rewritten as $\bx^T(t) \bM \bx(t)$ with $ \bM \in \mathbb{R}^{\cN \times \cN}$ and $\bK = \text{vec}(\bM))$ where 
$\text{vec}$ denotes the vectorization
operation. In some cases, in \cref{eq:def_LQO} we have $\bc = \mathbf{0}$, and thus the output has only the quadratic term.

Several projection-based MOR methodologies have been already proposed for \lqo systems. More precisely, balanced truncation-type methods were considered in \cite{BM10,PN19,BGP19}, while interpolation-based methods were used in \cite{BNLM12,GA19}. All these methods are intrusive, meaning that, they explicitly work with the state-space matrices $\bA,\bb,\bc$ and $\bK$ in \cref{eq:def_LQO}.

The main goal of this work is to develop a \emph{data-driven} modeling framework for \lqo systems where only input-output measurements, in the form of transfer function evaluations, are needed as opposed to the internal state-space representation. Therefore, our first goal is to derive transfer  functions for this special class of dynamical systems.

\subsection{The transfer functions of \lqo systems}

Many classes of nonlinear systems can be represented in the time domain by generalized kernels as presented in the classical Wiener or Volterra series representations. Generically, infinite number of kernels appear in such series, corresponding to each homogeneous subsystem. For more details we refer the reader to \cite{Wiener58,Rugh81}

For the \lqo system  \cref{eq:def_LQO}, the nonlinearity is present in the state-to-output equation only and one can write the input-output mapping of the system  in the frequency domain using two transfer functions
\begin{enumerate}
	\item one corresponding to the linear part of the output, i.e., $y_1(t) = \bc^T \bx(t)$;
	\item one corresponding to the quadratic part  of the output, i.e., $y_2(t)  = \bK (\bx(t) \otimes \bx(t))$.
\end{enumerate}
These transfer functions were recently derived in \cite{GA19} using their time-domain representations. In the next result, we introduce  and re-derive them for the completeness of the paper and to illustrate to the reader how they naturally appear.
\begin{lemma} \label{lem:tfs}
	Consider the \lqo system in \cref{eq:def_LQO} with $\bx(0) = \mathbf{0}$. Let the input $u(t)$ be a sum of the 
	$J$ harmonic terms, i.e.,
	\begin{equation} \label{inputu}
	u(t) = \sum_{j  =1}^J e^{\iop \omega_j t },~~~~\mbox{where}~~\omega_j > 0~~\mbox{for}~~j=1,2,\ldots,J,~~
	\end{equation}
	and $\iop^2 = -1$.  Then, the output $y(t)$ is given by
	\begin{equation} \label{yh1h2}
	y(t) =
	\sum_{j=1}^J H_1(\iop \omega_j) e^{\iop \omega_j t} +  \sum_{j=1}^J \sum_{\ell=1}^J H_2(\iop \omega_j,\iop \omega_\ell) e^{\iop (\omega_j + \omega_\ell)t},
	\end{equation}
	where 
	\begin{equation} \label{eq:H1}
	H_1(s) = \bc^T(s\bI_n-\bA)^{-1} \bb
	\end{equation}
	is the single-variable rational transfer function corresponding to $y_1(t)$ and 
	\begin{equation} \label{eq:H2}
	H_2(s,z) = \bK \Big[ (s\bI_n-\bA)^{-1} \bb \otimes (z\bI_n-\bA)^{-1} \bb \Big]
	\end{equation} 
	is the two-variable rational transfer function corresponding to $y_2(t)$ with
	$\bI_n$ denoting the identity matrix of size $ \cN\times \cN$.
\end{lemma}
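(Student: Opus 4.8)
The plan is to compute the forced response of the state equation, substitute it into the output equation in \cref{eq:def_LQO}, and then exploit the bilinearity of the Kronecker product to separate the linear and quadratic contributions.

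\textbf{Step 1 (state response).} Since the state equation $\dot\bx = \bA\bx + \bb\,u$ is linear, I would first handle a single harmonic $u(t) = e^{\iop\omega t}$ and then superpose. For such an input I posit a particular solution of the form $\bx(t) = \bv\,e^{\iop\omega t}$; substituting into the state equation and matching the $e^{\iop\omega t}$ coefficients yields $\iop\omega\,\bv = \bA\bv + \bb$, hence $\bv = (\iop\omega\bI_n-\bA)^{-1}\bb$, provided $\iop\omega$ is not an eigenvalue of $\bA$ (guaranteed, e.g., when $\bA$ is Hurwitz). By linearity, the response to the full input \cref{inputu} is
\begin{equation*}
\bx(t) = \sum_{j=1}^J (\iop\omega_j\bI_n-\bA)^{-1}\bb\; e^{\iop\omega_j t}.
\end{equation*}

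\textbf{Step 2 (linear part).} Applying $\bc^T$ gives $y_1(t) = \bc^T\bx(t) = \sum_{j=1}^J \bc^T(\iop\omega_j\bI_n-\bA)^{-1}\bb\,e^{\iop\omega_j t}$, which is exactly $\sum_{j=1}^J H_1(\iop\omega_j)e^{\iop\omega_j t}$ with $H_1$ as in \cref{eq:H1}. \textbf{Step 3 (quadratic part).} Write $\bx(t) = \sum_{j=1}^J \bx_j(t)$ with $\bx_j(t) = (\iop\omega_j\bI_n-\bA)^{-1}\bb\,e^{\iop\omega_j t}$. Bilinearity of the Kronecker product gives $\bx(t)\otimes\bx(t) = \sum_{j=1}^J\sum_{\ell=1}^J \bx_j(t)\otimes\bx_\ell(t)$, and the scalar-factorization property $(\alpha\bu)\otimes(\beta\bw)=\alpha\beta\,(\bu\otimes\bw)$ lets me pull out $e^{\iop\omega_j t}e^{\iop\omega_\ell t}=e^{\iop(\omega_j+\omega_\ell)t}$, so that $\bx_j(t)\otimes\bx_\ell(t) = \big[(\iop\omega_j\bI_n-\bA)^{-1}\bb \otimes (\iop\omega_\ell\bI_n-\bA)^{-1}\bb\big]\,e^{\iop(\omega_j+\omega_\ell)t}$. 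Multiplying by $\bK$ and invoking \cref{eq:H2} identifies each term as $H_2(\iop\omega_j,\iop\omega_\ell)$, giving $y_2(t) = \sum_{j=1}^J\sum_{\ell=1}^J H_2(\iop\omega_j,\iop\omega_\ell)e^{\iop(\omega_j+\omega_\ell)t}$. Adding $y_1$ and $y_2$ produces \cref{yh1h2}.

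\textbf{Main obstacle.} The genuinely delicate point is the passage from the exact zero-initial-condition solution to the steady-state expression used above. Solving by variation of constants, $\bx(t)=\int_0^t e^{\bA(t-\tau)}\bb\,u(\tau)\,d\tau$, each harmonic contributes the particular term $(\iop\omega_j\bI_n-\bA)^{-1}\bb\,e^{\iop\omega_j t}$ together with a transient $-(\iop\omega_j\bI_n-\bA)^{-1}e^{\bA t}\bb$; the stated formula therefore holds as the asymptotic (steady-state) response, which requires $\bA$ Hurwitz so that the transient decays. The remaining manipulations---the Kronecker double sum and the scalar factorization---are routine bookkeeping once set up, but they are the conceptual heart of the result, since they reveal how a quadratic output forces the mixed frequencies $\omega_j+\omega_\ell$ and thereby the genuinely bivariate transfer function $H_2$.
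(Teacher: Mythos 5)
Your proof is correct and follows essentially the same route as the paper's: write the steady-state state response as $\bx(t)=\sum_{j=1}^J (\iop\omega_j\bI_n-\bA)^{-1}\bb\,e^{\iop\omega_j t}$, substitute into the output equation, and use bilinearity of the Kronecker product to obtain the double sum defining $H_2$. Your explicit variation-of-constants discussion of the decaying transient simply makes precise the ``in steady-state'' qualifier the paper invokes without elaboration, so it is a welcome clarification rather than a different argument.
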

\begin{proof} For the input $u(t)$ in \cref{inputu} with $\bx(0) = \mathbf{0}$,
	the solution of the linear state-equation in \cref{eq:def_LQO} in steady-state can be written as a sum of scaled complex exponential functions as
	\begin{equation}\label{x_sum}
	\bx(t) = \sum_{j=1}^J \bG_1(\iop \omega_j) e^{\iop \omega_j t},
	\end{equation}
	where
	$\bG_1(s) = (s\bI-\bA)^{-1} \bb$.
	Substituting \cref{x_sum} into the output equation of \cref{eq:def_LQO}, we obtain
	\begin{align}
	\begin{split}
	y(t) &= \bc^T \sum_{j=1}^J \bG_1(\iop \omega_j) e^{\iop \omega_j t} \\
	+& \bK \Big{[} \sum_{j=1}^J \bG_1(\iop \omega_j) e^{\iop \omega_j t} \Big{]} \otimes \Big{[} \sum_{\ell=1}^J \bG_1(\iop \omega_\ell) e^{\iop \omega_\ell t} \Big{]} \\
	&=  \sum_{j=1}^J \bc^T \bG_1(\iop \omega_j) e^{j \omega_j t} \\ &+ \sum_{j=1}^J \sum_{\ell=1}^J \bK \big{[} \bG_1(\iop \omega_j)  \otimes \bG_1(\iop \omega_\ell) \big{]} e^{\iop (\omega_j + \omega_\ell)t}. 
	\end{split}
	\end{align}
	Substituting $\bG_1(s) = (s\bI-\bA)^{-1} \bb$ 
	back into the last equation 
	yields the desired result\cref{yh1h2} with $H_1(s)$ and  
	$H_2(s,z)$ as defined in \cref{eq:H1}
	and \cref{eq:H2}. 
\end{proof}

\Cref{lem:tfs} shows that the \lqo system \cref{eq:def_LQO} is characterized by two transfer functions, namely $H_1(s)$ (corresponding to the linear component $y_1(t)$ in the output)  and $H_2(s,z)$ (corresponding to the quadratic linear  $y_2(t)$ in the output).
As in the classical linear case,  $H_1(s)$ is a
rational function in single variable. On the other hand, 
$H_2(s,z)$ is also a rational function, but of two variables. These two transfer functions that fully describe the \lqo system  \cref{eq:def_LQO} will play the fundamental role in our analysis to extend  barycentric interpolation and \aaa to the \lqo setting. Before we establish the theory for \lqo systems, we will briefly review the \aaa algorithm for linear systems in \Cref{sec:aaa}. 

\begin{remark}
	In the proposed framework, we will require sampling the two transfer functions $H_1(s)$ and $H_2(s,z)$. As it is shown in \Cref{lem:tfs}, this could be achieved by exciting the system (as a black box) with purely oscillatory control inputs and measuring the outputs, and performing a Fourier transformation. For more details on such procedures in similar settings, we refer the reader to
	\cite{morKarGA20}.
	We also note that \cite{Tick61} examines systems described by two time-domain kernels together with their Fourier transformations (deemed as transfer functions) and their measurements. Even though no explicit representation of these functions are considered in terms of a state-space realization, those ideas also equally apply to sample $H_1(s)$ and $H_2(s,z)$.
\end{remark}

\begin{remark}
	Note that in the special case for which it holds that $\bK = \alpha ( \bc^T \otimes \bc^T)$, we obtain $y_2(t) = \alpha y_1^2(t)$ where $\alpha$ is a scalar. Therefore, in this case the output $y(t)$ is a quadratic polynomial in the linear output $y_1(t)$ and the \lqo model can be interpreted as a Wiener model \cite{Wiener58}. However, our focus here is on the general case  of $\lqo$ systems without this special case.
\end{remark}

\section{Barycentric rational approximation for linear systems and the \aaa algorithm}
\label{sec:aaa}

For an underlying function 
$H(\cdot): \IC \to \IC$, e.g., transfer function of a single-input/single-output 
(SISO)
linear dynamical system, assume the following set of measurements:
\begin{align}  \label{fulldata}
\{H(s_i)\} \in \mathbb{C}~~\mbox{where}~~s_i \in \mathbb{C}\quad\mbox{for}~~ i=1,2,\ldots,N_s.
\end{align}
Partition the sampling points into two disjoint sets: 
\begin{equation}
\begin{array}{rcl}
\{ s_1, \dots, s_{N_s} \}  & \hspace{-2ex}= &\hspace{-2ex}
\{~\s_1, \dots, \s_{n} ~\}  \cup  
\{~\sh_1, \dots, \sh_{N_s-n}~\}\\ &\hspace{-2ex} \eqdef &\hspace{-2ex}
\{\hspace{3.5ex}~\bolds \hspace{4.75ex}~\cup\hspace{4.5ex}~\boldsh~\hspace{7ex}\}.
\end{array}
\label{xixihat}
\end{equation}
We will clarify later how this partitioning is chosen.
Based on  \cref{xixihat}, define the sampled values
\begin{align}
\begin{array}{lcr}
h_i & \hspace{-2ex}\eqdef &\hspace{-1.5ex}H(\s_i)~~~\mbox{for}~~~i=1,2,\ldots,n,~~\mbox{and}\\
\hh_i & \hspace{-2ex}\eqdef &\hspace{-1.5ex}H(\sh_i)~~~\mbox{for}~~~i=1,2,\ldots,N_s-n,
\end{array} \label{lindata}
\end{align}
and the corresponding data sets
\begin{align}
\bh \eqdef \{h_1,\ldots,h_{n}\}~~~\mbox{and}~~~
\bhh \eqdef \{\hh_1,\ldots,\hh_{N_s-n}\}.
\label{lindataset}
\end{align}
Define the rational function $r(s)$ in barycentric form \cite{berrut06barycentric}, a numerically stable representation of rational functions\footnote{With the addition of $1$ to the denominator, we guarantee that $r(s)$ is a strictly proper rational function with a numerator degree $n-1$ and the denominator degree $n$. This is done in the anticipation of the dynamical system in \cref{eq:def_LQO} we aim to approximate  where there will be no direct input-to-output mapping. This is not a restriction, and  the numerator and denominator degrees can be chosen in a different way \cite{berrut06barycentric,NST18}.}:
\begin{equation}\label{eq:bary_rep}
r(s) =  \dfrac{p(s)}{q(s)} = \frac{\displaystyle\sum_{k=1}^n \frac{w_k h_k}{s - \s_k}}{\displaystyle 1+ \sum_{k=1}^n\frac{w_k}{s - \s_k}},
\end{equation}
where
$\s_k \in \mathbb{C}$ are the \emph{sampling (support) points} and
the \emph{weights} $w_k \in \mathbb{C}$ are to be determined.  By construction, the degree-($n-1$) rational function
$r(s)$ in   \cref{eq:bary_rep} is a rational interpolant at the support point set $\bolds$, i.e.,
\begin{align}  \label{singleinterp}
r(\s_k) = h_k \quad\mbox{for}\quad  k=1,2,\ldots,n,
\end{align}
assuming $w_k \neq 0$. Then, the freedom in choosing the weights $\{w_k\}$ can be used to match the remaining the  data $\bhh$ in an appropriate measure.

Assuming enough degrees of freedom,   
\cite{AA86} chooses the weights $\{w_k\}$ to enforce interpolation of $\bhh$ as well, by computing the null space of the corresponding divided difference matrix, thus obtaining a degree-($n-1$) rational function interpolating the full data \cref{fulldata}. We skip the details for the conditions to guarantee the existence and uniqueness of such a rational interpolant  and  refer the reader to \cite{AA86,AntBG20} for details. 

The \aaa (Adaptive-Antoulas-Anderson) algorithm~\cite{NST18}, on the other hand, elegantly combines interpolation and least-squares (\ls) fitting. In the barycentric form \cref{eq:bary_rep},
which interpolates the data $\bh$ by construction,
\aaa chooses the weights $\{w_k\}$ to minimize a \ls error over the data  $\bhh$. Note that the \ls problem over $\bhh$ is nonlinear in the weights $\{w_k\}$ since these weights appear in the denominator of $r(s)$ as well. \aaa solves a relaxed linearized \ls problem instead.  
For a sampling point $\sh_i$ in the set $\bolds$, \aaa uses the linearization
\begin{equation} \label{linearaaa}
\hh_i - r(\sh_i)
= 
\dfrac{1}{q(\sh_i)} \left( \hh_i q (\sh_i) - p (\sh_i) \right) 
\rightsquigarrow 
\hh_i q (\sh_i) - p (\sh_i),  
\end{equation}
leading to the linearized \ls problem 
\begin{align} \label{aaals}
\min_{w_1,\ldots,w_{k}} \sum_{i=1}^{N_s-n} 
\mid \hh_i q(\sh_i) - p(\sh_i) \mid^2.
\end{align}

\aaa is an iterative algorithm and builds the partitioning \cref{xixihat} using a greedy search.  Assume in  step $n$, 
\aaa has the rational approximant $r(s)$ as in \cref{eq:bary_rep} corresponding to the  partitioning \cref{xixihat} where the weights $\{w_k\}$ are selected by solving \cref{aaals}. \aaa updates \cref{xixihat} via a greedy search by finding $\sh_i \in \boldsh$ for which the error $\mid r(\sh_i) - \hh_i\mid$  is the largest. This sampling point is, then, added to the interpolation set $\bolds$, the barycentric  rational approximant $r(s)$
in \cref{eq:bary_rep} is updated accordingly (it has  one higher degree now), and the new weights are computed, as before, solving the  linearized \ls problem. The procedure is repeated until either a desired order or an error tolerance is obtained. 
For further details, we refer the reader to the original source \cite{NST18}.  The \aaa algorithm proved very flexible and effective, and has been employed in various applications such as  rational approximation over disconnected domains \cite{NST18}, 
solving nonlinear eigenvalue problems \cite{lietaert2018automatic}, modeling of parametrized dynamics \cite{CS20}, and approximation of matrix-valued functions \cite{GG20}.

\section{Barycentric representations for 
	\lqo systems} \label{sec:lqobary}

To develop interpolating barycentric forms for $H_1(s)$
and $H_2(s,z)$, we first need to specify the data corresponding to the underlying
the \lqo system $\Sigma_{\lqo}$. 
The first transfer function $H_1(s)$ of 
$\Sigma_{\lqo}$ 
is a regular single-variable rational function and, as in \Cref{sec:aaa}, we sample 
$H_1(s)$ at  distinct points 
$\{s_1,\ldots,s_{N_s}\}$
to obtain the data
set
\begin{equation}  \label{H1fulldata}
\{H_1(s_i)\} \in \mathbb{C}~~\mbox{where}~~s_i \in \mathbb{C}\quad\mbox{for}~~ i=1,2,\ldots,N_s.
\end{equation}
The second transfer function $H_2(s,z)$,
on the other hand, is a function of two-variables. Therefore, in agreement with the data \cref{H1fulldata}, we will  sample 
$H_2(s,z)$ at the corresponding rectangular grid:
for $i,j=1,2,\ldots,N_s$,
\begin{equation}  \label{H2fulldata}
\{H_2(s_i,s_j)\} \in \mathbb{C}~~\mbox{where}~~s_i,s_j \in \mathbb{C}.
\end{equation}

Partition the full set of sampling points into two disjoint sets
{\small \begin{equation}  \label{spart}
	\{ s_1, \dots, s_{N_s} \} =
	\{~\s_1, \dots, \s_{n} ~\}  \cup  
	\{~\sh_1, \dots, \sh_{N_s-n}~\}
	= \bolds~ \cup~ \boldsh
	\end{equation}}
and define the sampled values (measurements):
\begin{equation} \label{data_H1}
h_i \eqdef H_1(\s_i)~~~\mbox{for}~~i=1,2,\ldots,n
\end{equation}
and 
\begin{align} \label{data_H2}
h_{i,j} \eqdef H_2(\s_i,\s_j) ~~~\mbox{for}~~i,j=1,2,\ldots,n.
\end{align}
Then, the goal is to a construct a \emph{data-driven}  \lqo system directly from its samples without access access to  internal dynamics of $\Sigma_{\lqo}$. The
partition \cref{spart} and the error measure used in approximating the data 
will be clarified later. First we will show how the data  in \cref{H1fulldata} and \cref{H2fulldata} can be used to develop barycentric-like representations corresponding to a reduced \lqo system. We will use the notation $r_1(s)$ to denote the rational approximation to $H_1(s)$ and  $r_2(s,z)$ to  
$H_2(s,z)$.

\begin{proposition} \label{r1thm}
	Given the $H_1(s)$ samples in \cref{H1fulldata}, 
	pick the nonzero weights $\{w_1,w_2,\ldots,w_n\}$.
	Then, the barycentric rational function
	\begin{align} \label{H1_bary}
	\hspace{-3ex} r_1(s) = \dfrac{n_1(s)}{d_1(s)} = 
	{\displaystyle \sum_{k=1}^n \frac{ w_k h_k }{s - \s_k }} \Bigg/\left( {1+\displaystyle \sum_{k=1}^n \frac{w_k }{s- \s_k}}\right)
	\end{align}
	interpolates the data in \cref{data_H1}.
	Let $\bfe\in \mathbb{C}^n$ denote the vector of ones.  Define the matrices
	\begin{align}  \label{r1ss}
	\begin{split} 
	\bhb ~&=~ \left[ \begin{array}{cccc} w_1 & w_2 & \ldots & w_n \end{array} \right]^T \in \mathbb{C}^n, \\
	\mathbf{\Xi} ~&=~ {\mathsf{diag}}(\s_1,\ldots,\s_n) \in
	\mathbb{C}^{n\times n} \\
	\bhA ~&=~ \mathbf{\Xi}  -  \bhb \bfe^T\in
	\mathbb{C}^{n\times n},~\mbox{and}\\ 
	\bhc^T ~&=~ \left[ \begin{array}{cccc} h_1 & h_2 & \ldots & h_n \end{array} \right]\in
	\mathbb{C}^{n}.
	\end{split}
	\end{align}
	Then, $r_1(s)$ has the state-space form
	\begin{equation}  \label{r1tf}
	r_1(s)= \bhc^T(s\hat{\bI}-\bhA)^{-1}\bhb,
	\end{equation}
	where $\hat{\bI}$ is the identity matrix of dimension
	$n\times n$.
\end{proposition}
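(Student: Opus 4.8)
The proposition bundles two assertions: that $r_1(s)$ interpolates the data in \cref{data_H1}, and that it admits the state-space realization \cref{r1tf}. The first I would dispatch immediately, since the barycentric form \cref{H1_bary} is structurally identical to \cref{eq:bary_rep} and so the interpolation property \cref{singleinterp} applies verbatim: as $s\to\s_i$, both $n_1(s)$ and $d_1(s)$ diverge, but their singular terms are $w_i h_i/(s-\s_i)$ and $w_i/(s-\s_i)$; multiplying through by $(s-\s_i)$ and passing to the limit leaves $w_i h_i / w_i = h_i$, using $w_i\neq 0$. The substance of the result is therefore the realization, and the plan is to invert $s\hat{\bI}-\bhA$ in closed form.

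The key structural observation is that the realization matrix is a rank-one update of a diagonal matrix. Writing
\[
s\hat{\bI} - \bhA = (s\hat{\bI} - \mathbf{\Xi}) + \bhb\,\bfe^T,
\]
the first summand is the diagonal matrix $\bD \eqdef \mathsf{diag}(s-\s_1,\ldots,s-\s_n)$ and $\bhb\,\bfe^T$ is a rank-one correction. Its inverse is then available via the Sherman--Morrison formula
\[
(\bD + \bhb\,\bfe^T)^{-1} = \bD^{-1} - \frac{\bD^{-1}\bhb\,\bfe^T\bD^{-1}}{1 + \bfe^T\bD^{-1}\bhb}.
\]

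Next I would evaluate the scalar quantities arising when this inverse is sandwiched between $\bhc^T$ and $\bhb$. Since $\bD^{-1} = \mathsf{diag}\big(1/(s-\s_1),\ldots,1/(s-\s_n)\big)$, one reads off $\bhc^T\bD^{-1}\bhb = \sum_k w_k h_k/(s-\s_k) = n_1(s)$ and $\bfe^T\bD^{-1}\bhb = \sum_k w_k/(s-\s_k) = d_1(s)-1$, so the Sherman--Morrison denominator is exactly $1 + \bfe^T\bD^{-1}\bhb = d_1(s)$. Substituting gives
\[
\bhc^T(s\hat{\bI}-\bhA)^{-1}\bhb = n_1(s) - \frac{n_1(s)\,\big(d_1(s)-1\big)}{d_1(s)} = \frac{n_1(s)}{d_1(s)} = r_1(s),
\]
which is the claimed identity \cref{r1tf}.

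I do not expect a genuine obstacle here: once the rank-one-plus-diagonal structure is recognized, the argument collapses to matching the Sherman--Morrison terms against the barycentric numerator and denominator sums. The only hypotheses invoked are $w_k\neq 0$ (for the interpolation claim) and that $s$ avoids the support points $\s_k$, so that $\bD$ is invertible — the generic regime in which a transfer function is evaluated. The mild subtlety worth flagging in the write-up is that the identity \cref{r1tf} holds as rational functions, hence at every $s$ off the support points, while the interpolation values at the $\s_k$ are recovered as the removable-singularity limits just described.
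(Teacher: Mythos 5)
Your proposal is correct and follows essentially the same route as the paper: both decompose $s\hat{\bI}-\bhA$ as a diagonal matrix plus the rank-one term $\bhb\,\bfe^T$, invert via Sherman--Morrison, and identify $\bhc^T\bD^{-1}\bhb$ and $\bfe^T\bD^{-1}\bhb$ with the barycentric sums in \cref{H1_bary} (the paper treats the interpolation claim the same way, as a restatement of \cref{singleinterp}). The only cosmetic difference is that the paper factors the common vector $\hat{\bPhi}_s^{-1}\bhb$ before contracting with $\bhc^T$, whereas you contract first and cancel $n_1(s)\bigl(d_1(s)-1\bigr)/d_1(s)$ afterwards; the computations are identical in substance.
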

\begin{proof}
	The fact that $r_1(s)$ is an interpolating rational function for 
	the data \cref{data_H1} is just a restatement of \cref{singleinterp} for completeness. To prove \cref{r1tf}, 
	we will use the \textit{Sherman-Morrison} formula \cite{GoluVanl2013}: Let $\bM  \in \mathbb{C}^{n \times n}$ be an invertible and $\bu,\bv \in \mathbb{C}^n$ be such that 
	$1 + \bv^* \bM^{-1} \bu \neq 0.$ Then,  
	\begin{align} \label{smw}
	(\bM + \bu \bv^*) ^ {-1} = \bM^{-1} - \frac{\bM^{-1} \bu \bv^* \bM^{-1}}{ 1+\bv^* \bM^{-1} \bu}.
	\end{align}
	From \cref{r1ss} and \cref{r1tf}, we have 
	\begin{equation} \label{r1new}
	r_1(s) 
	= \bhc^T (s \hat\bI - \bhA)^{-1} \bhb =  \bhc^T [(s \bI - \mathbf{\Xi})+\bhb\bfe^T]^{-1} \bhb.
	\end{equation}
	To simplify the notation, let
	$\hat{\bPhi}_{s} = s \bI_n - \mathbf{\Xi}$. Then, applying the Sherman-Morrison formula to the middle term
	in \cref{r1new}
	with $\bM =\hat{\bPhi}_{s}$, $\bu = \bhb$, and $\bv = \bfe$, we obtain
	\begin{align} 
	r_1(s) &=  \bhc^T \left(\hat{\bPhi}_{s}+\bhb \bfe^T\right)^{-1} \bhb \nonumber \\
	&= 
	\bhc^T \left( \hat{\bPhi}_s^{-1} - \frac{\hat{\bPhi}_{s}^{-1} \bhb \bfe^T \hat{\bPhi}_s^{-1}}{ 1+\bfe^T \hat{\bPhi}_{s}^{-1} \bhb} \right) \bhb \nonumber \\
	&= \bhc^T \left( \hat{\bPhi}_{s}^{-1} \bhb - \frac{  \hat{\bPhi}_{s}^{-1} \bhb \cdot \bfe^T \hat{\bPhi}_{s}^{-1} \bhb}{1+\bfe^T \hat{\bPhi}_{s}^{-1} \bhb} \right) \nonumber \\
	&= \bhc^T\frac{  \hat{\bPhi}_{s}^{-1} \bhb}{1+\bfe^T  \hat{\bPhi}_{s}^{-1} \bhb}. \label{r1final}
	\end{align}
	Since $\mathbf{\Xi}$ 
	is diagonal,  
	$$\hat{\bPhi}_{s}^{-1} = (s \bI - \mathbf{\Xi})^{-1} =  \mathsf{diag}(\left[ \begin{array}{ccc} (s - \s_1)^{-1} & \ldots & (s - \s_n)^{-1}  \end{array} \right]).$$
	Then, using the definitions of $\bhb$ and $\bhc$
	in \cref{r1ss},  we obtain
	\begin{equation}  \label{phihatsb}
	\bhc^T\hat{\bPhi}_{s}^{-1} \bhb = \displaystyle \sum_{k=1}^n \frac{ w_k h_k }{s - \s_k } 
	~~\mbox{and}~~
	\bfe^T \hat{\bPhi}_{s}^{-1} \bhb = \displaystyle \sum_{k=1}^n \frac{ w_k}{s - \s_k }. 
	\end{equation}
	Substituting these last two equalities into \cref{r1final} 
	yields \cref{r1tf}.
\end{proof}
We note that state-space realizations for  rational functions are unique up to a similarity transformations. For other equivalent state-space representations of a barycentric form, we refer the reader to, e.g., \cite{ALI17,lietaert2018automatic}.

Given the samples of $H_1(s)$ ( the data in \cref{data_H1}) of the \lqo system \cref{eq:def_LQO}, \Cref{r1thm} constructs the \emph{linear} part of the data-driven \lqo model, directly from these samples. What we need to achieve next is to use the  $H_2(s,z)$ samples (data in \cref{data_H2}) to construct  a two-variable rational function $r_2(s,z)$ in a barycentric-like form corresponding to the quadratic part of the data-driven \lqo model. However, $r_2(s,z)$ cannot be constructed independently from $r_1(s)$. Once  $r_2(s,z)$ is constructed, we should be able to interpret $r_1(s)$ and $r_2(s,z)$ as the linear and quadratic transfer functions of a single \lqo system. This is the precise reason that we cannot simply view $r_2(s,z)$ as an independent two-variable rational function and  use the classical multivariate barycentric form \cite{An12two,AntBG20}. Therefore,
$r_2(s,z)$ needs to have the form
$$
r_2(s,z) = \hat{\bK} \big{[} (s\hat{\bI}-\bhA)^{-1} \bhb \otimes (z\hat{\bI}-\bhA) \bhb \big{]},
$$
where  $\bhA$ and $\bhb$ are the same matrices 
from \cref{r1ss} used in modeling $r_1(s)$ and 
$\hat{\bK} \in \IC^{1 \times n^2}$ is the (quadratic) free variable that will incorporate to model the new data \cref{data_H2}. The next result achieves this goal.

\begin{theorem}  \label{r2thm}
	Assume the set-up in \Cref{r1thm}.
	Further assume that the $H_2(s,z)$ samples in \cref{H2fulldata} are given. 
	Define, the two-variable  function $r_2(s,z)$ in a barycentric-like form:
	\small
	\begin{equation}\label{eq:H2_bary}
	r_2(s,z) = \frac{\displaystyle \sum_{k=1}^n \sum_{\ell=1}^n \frac{ h_{k,\ell}w_k w_\ell}{(s-\s_k)(z-\s_\ell)}}{1+\displaystyle \sum_{k=1}^n \frac{w_k }{s- \s_k} + \displaystyle \sum_{\ell=1}^n \frac{w_\ell }{z- \s_\ell} +\displaystyle \sum_{k=1}^n \sum_{\ell=1}^n \frac{ w_k w_\ell}{(s-\s_k)(z-\s_\ell)}}.
	\end{equation}
	\normalsize
	Then, $r_2(s,z)$ interpolates the data \cref{data_H2}, i.e.,
	\begin{equation} \label{h2intpl}
	r_2(\s_i,\s_j)  = H_2(\s_i,\s_j)~~\mbox{for} ~~i,j = 1,\ldots,n.
	\end{equation}
	Define $\hat{\bM} \in 
	\mathbb{C}^{n \times n} $ and $\hat{\bK} \in 
	\mathbb{C}^{1 \times n^2}$ using 
	\begin{align}
	[\hat{\bM}]_{i,j} &= h_{i,j}~~\mbox{for}~~i,j=1,2,\ldots,n \\
	~~\mbox{and}~~\hat{\bK} &= [\mathsf{vec}(\hat{\bM)}]^T.  \label{hatKdef}
	\end{align}
	Then,  $r_2(s,z)$ has the state-space form
	\begin{equation} \label{r2tf}
	r_2(s,z) = \hat{\bK} \big{[} (s\hat{\bI}-\bhA)^{-1} \bhb \otimes (z\hat{\bI}-\bhA)^{-1} \bhb \big{]}.
	\end{equation}
\end{theorem}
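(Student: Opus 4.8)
The two assertions---the interpolation property \cref{h2intpl} and the state-space representation \cref{r2tf}---are logically independent, so the plan is to dispatch them in turn, reusing the Sherman--Morrison computation from the proof of \Cref{r1thm}.

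For \cref{h2intpl}, I would argue directly from the barycentric form \cref{eq:H2_bary} by a limiting computation. Fixing $i,j$, I multiply both numerator and denominator of \cref{eq:H2_bary} by $(s-\s_i)(z-\s_j)$ and let $(s,z)\to(\s_i,\s_j)$. In the numerator, any term with $(k,\ell)\neq(i,j)$ retains a surviving factor $(s-\s_i)$ or $(z-\s_j)$ that sends it to zero, so only the $(k,\ell)=(i,j)$ term remains, giving $h_{i,j}w_iw_j$. In the denominator the constant $1$ and both single sums acquire an extra vanishing factor and drop out, while the double sum contributes $w_iw_j$ by the same cancellation. Since the weights are nonzero, the ratio is $h_{i,j}=H_2(\s_i,\s_j)$, which is \cref{h2intpl}. (Equivalently, one can show that $(s\hat{\bI}-\bhA)^{-1}\bhb$ tends to the $i$-th canonical unit vector as $s\to\s_i$ and read the interpolation off from \cref{r2tf}, but the limit argument on the barycentric form \cref{eq:H2_bary} is cleaner and avoids committing to a vectorization convention.)

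For \cref{r2tf}, write $\bg(s)\eqdef(s\hat{\bI}-\bhA)^{-1}\bhb$. Reading the Sherman--Morrison calculation behind \cref{r1final,phihatsb} \emph{before} contracting with $\bhc^T$ gives the closed form $\bg(s)=\hat{\bPhi}_s^{-1}\bhb/d_1(s)$, where $\hat{\bPhi}_s=s\bI-\mathbf{\Xi}$ and $d_1(s)=1+\bfe^T\hat{\bPhi}_s^{-1}\bhb=1+\sum_k w_k/(s-\s_k)$ is exactly the denominator of $r_1$ in \cref{H1_bary}. Because $\mathbf{\Xi}$ is diagonal, the $k$-th entry of $\bg(s)$ is $g_k(s)=\big(w_k/(s-\s_k)\big)/d_1(s)$. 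Expanding the contraction $\hat{\bK}\,[\bg(s)\otimes\bg(z)]$ with $\hat{\bK}$ defined by \cref{hatKdef} produces $\sum_{k,\ell}h_{k,\ell}\,g_k(s)\,g_\ell(z)$, and substituting the entries factors this as $\big(\sum_{k,\ell}h_{k,\ell}w_kw_\ell/((s-\s_k)(z-\s_\ell))\big)\big/\big(d_1(s)\,d_1(z)\big)$, whose numerator is exactly that of \cref{eq:H2_bary}.

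The remaining step is the one genuinely algebraic identity: expanding $d_1(s)\,d_1(z)=\big(1+\sum_k w_k/(s-\s_k)\big)\big(1+\sum_\ell w_\ell/(z-\s_\ell)\big)$ reproduces the four pieces of the denominator of \cref{eq:H2_bary}---the constant $1$, the two single sums, and the double sum---so the two denominators agree and $\hat{\bK}[\bg(s)\otimes\bg(z)]=r_2(s,z)$. I expect the principal difficulty to be notational rather than conceptual: the contraction $\hat{\bK}[\bg(s)\otimes\bg(z)]$ yields $\sum_{k,\ell}h_{k,\ell}g_k(s)g_\ell(z)$, with $s$ paired to the first index $k$ and $z$ to the second index $\ell$, only under the specific $\mathsf{vec}$/Kronecker ordering convention implicit in \cref{eq:H2,hatKdef}; with the opposite (column-major) convention one instead obtains the transpose $h_{\ell,k}$, which matches \cref{eq:H2_bary} only when the data is symmetric. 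Fixing this ordering to be consistent with the one used for $\bK$ in \cref{eq:H2} is the crux that must be stated carefully, after which the matching of the numerators is immediate.
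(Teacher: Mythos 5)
Your proposal is correct and follows essentially the same route as the paper: for \cref{r2tf} you reuse the Sherman--Morrison identity $(s\hat{\bI}-\bhA)^{-1}\bhb=\hat{\bPhi}_{s}^{-1}\bhb\big/\big(1+\bfe^T\hat{\bPhi}_{s}^{-1}\bhb\big)$ exactly as in the paper's derivation of \cref{r2denom} and then expand $d_1(s)\,d_1(z)$ into the four denominator terms, while for \cref{h2intpl} your limit after multiplying by $(s-\s_i)(z-\s_j)$ is just a local version of the paper's clearing of denominators with the polynomials $P(s,z)$ and $P_{i,j}(s,z)$ from \cref{eq:definepij}. Your closing caveat about the $\mathsf{vec}$/Kronecker ordering is a fair point the paper passes over silently: with the column-major convention one gets $\hat{\bK}\,[\bg(s)\otimes\bg(z)]=\sum_{k,\ell}h_{\ell,k}\,g_k(s)\,g_\ell(z)$, so \cref{r2tf} as stated requires either the row-major reading of \cref{hatKdef} or the symmetry $h_{k,\ell}=h_{\ell,k}$, which holds whenever $H_2$ arises from a (symmetrizable) quadratic output form as in \cref{eq:def_LQO}.
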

\begin{proof}
	To prove the interpolation property
	\cref{h2intpl} of the barycentric representation  \cref{eq:H2_bary},
	inspired by the linear case, 
	we start by introducing various polynomials in one or  two variables:
	\begin{align}
	\hspace{-1ex}
	\begin{split}
	p(s) &= \prod_{k=1}^n (s-\xi_k), \ \ p(z) = \prod_{\ell=1}^n (z-\xi_{\ell}),  \\
	p_i(s) &= \prod_{k=1,k \neq i}^n (s-\xi_k), \ \ p_j(z) = \prod_{\ell=1,\ell \neq j}^n (z-\xi_{\ell}),  \\
	P(s,z) &= \prod_{k=1}^n \prod_{\ell=1}^n (s-\xi_k)(z-\xi_\ell),~~\mbox{and}  \\
	P_{i,j}(s,z) &= \prod_{k=1, k \neq i}^n \prod_{\ell=1, \ell \neq j}^n (s-\xi_k) (z-\xi_\ell), 
	\end{split}
	\label{eq:definepij}
	\end{align}
	for  $i,j=1,\ldots,n$. Multiply both the numerator and denominator of $r_2(s,z)$ in (\ref{eq:H2_bary}) with
	$P(s,z)$ to obtain
	\begin{align}
	r_2(s,z) &= \frac{n_2(s,z)}{d_2(s,z)},  \label{r2n2d2}
	\end{align}
	\text{with}
	\begin{equation}\label{formulae_n2_d2}
	\begin{split}
	n_2(s,z)  &= \displaystyle \sum_{k=1}^n \sum_{\ell=1}^n  h_{k,\ell}w_k w_\ell P_{k,\ell}(s,z),  \\
	d_2(s,z) &= P(s,z)+\displaystyle \sum_{k=1}^n w_k p_k(s) p(z) + \displaystyle \sum_{\ell=1}^n w_\ell p_\ell(z) p(s)  \\
	&+\displaystyle \sum_{k=1}^n \sum_{\ell=1}^n w_k w_\ell P_{k,\ell}(s,z).
	\end{split}
	\end{equation}
	Then, evaluate $r_2(s,z)$ at $s = \xi_i$ and $z = \xi_j$ to obtain
	\begin{align*}
	\hspace{-5mm}   r_2(\xi_i,\xi_j) =  \frac{n_2(\xi_i,\xi_j)}{d_2(\xi_i,\xi_j)}
	= \frac{h_{i,j}w_i w_j P_{i,j}(\xi_i,\xi_j)}{w_i w_j P_{i,j}(\xi_i,\xi_j)} = h_{i,j}.
	\end{align*}
	To prove  \cref{r2tf}, we first note that 
	\begin{align*} 
	\begin{split}
	r_2(s,z) &=   \hat{\bK} \left[ (s\hat{\bI}-\bhA)^{-1} \bhb \otimes (z\hat{\bI}-\bhA)^{-1} \bhb \right] \\
	&= \hat\bK \left[  \frac{  \hat{\bPhi}_{s}^{-1} \bhb}{1+\bfe^T  \hat{\bPhi}_{s}^{-1} \bhb} \otimes \frac{  \hat{\bPhi}_{z}^{-1} \bhb}{1+\bfe^T  \hat{\bPhi}_{z}^{-1} \bhb} \right], 
	\end{split}
	\end{align*}
	where we used the fact 
	$$
	(s\hat\bI - \bhA)^{-1} \bhb = \left(\hat{\bPhi}_{s}+\bhb \bfe^T\right)^{-1}\bhb =
	\frac{  \hat{\bPhi}_{s}^{-1} \bhb}{1+\bfe^T  \hat{\bPhi}_{s}^{-1} \bhb},
	$$
	as shown in deriving \cref{r1final}. 
	Since $\hat{\bPhi}_{s}$ diagonal,  we have
	{\small 
		\begin{align*}
		\begin{split}
		r_2(s,z) =
		\frac{\hat\bK }{\big{(} 1+\bfe^T  \hat{\bPhi}_{s} \bhb \big{)}\big{(} 1+\bfe^T  \hat{\bPhi}_{z} \bhb \big{)}}  \left[ \begin{matrix}
		\frac{w_1}{s-\s_1} \\
		\vdots \\ \frac{w_n}{s-\s_n}  
		\end{matrix} \right] \otimes \left[ \begin{matrix}
		\frac{w_1}{z-\s_1} \\
		\vdots \\ \frac{w_n}{z-\s_n}  
		\end{matrix} \right].  
		\end{split}
		\end{align*}
	}
	Then, using the definition of $\hat\bK$ 
	in \cref{hatKdef} together with the second 
	formula in \cref{phihatsb}, we obtain
	{\small 
		\begin{align} 
		&r_2(s,z) =  \frac{ \sum_{k=1}^n \sum_{\ell=1}^n \frac{h_{k,\ell}  w_k w_\ell}{(s-\s_k)(z-\s_\ell)}}{\Big{(}1+ \sum_{k=1}^n \frac{w_k }{s- \s_k}\Big{)}\Big{(}1+ \sum_{\ell=1}^n \frac{w_\ell }{z- \s_\ell}\Big{)}} \label{r2denom} \\[2mm]
		& = \frac{\sum_{k=1}^n \sum_{\ell=1}^n \frac{h_{k,\ell}  w_k w_\ell}{(s-\s_k)(z-\s_\ell)}}{1+\displaystyle \sum_{k=1}^n \frac{w_k }{s- \s_k} +  \sum_{\ell=1}^n \frac{w_\ell }{z- \s_\ell} + \sum_{k=1}^n \sum_{\ell=1}^n \frac{  w_k w_\ell}{(s-\s_k)(z-\s_\ell)}}, \nonumber
		\end{align}
	}
	which concludes the proof. 
\end{proof}
The next result follows from \Cref{r1thm,r2thm}.
\begin{corollary} \label{thisislqo}
	Assume the set-up in \Cref{r1thm,r2thm}. Then, interpolating rational functions $r_1(s)$ and $r_2(s,z)$ together correspond to an interpolatory  \lqo model
	\begin{align}\label{redLQO}
	\widehat{\Sigma}_{\lqo}: \begin{cases}  \dot{\hat{\bx}}(t)=\bhA\bx(t)+\bhb u(t),\\
	\hspace{0.5mm} \hat{y}(t)=\bhc^T\hat{\bx}(t)
	+{\hat{\bK} \big{[} \hat{\bx}(t) \otimes \hat{\bx}(t) \big{]}}.
	\end{cases}
	\end{align}
	In others words, the first (linear) transfer function 
	of $\widehat{\Sigma}_{\lqo}$
	is $r_1(s)$ and its second transfer function is $r_2(s,z)$. 
\end{corollary}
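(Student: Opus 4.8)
The plan is to recognize this corollary as essentially an assembly step: the two preceding results have already constructed all the pieces, and what remains is to verify that stacking the matrices $\bhA$, $\bhb$, $\bhc$, and $\hat{\bK}$ into the state-space template \cref{redLQO} produces a bona fide \lqo system whose two transfer functions are precisely $r_1(s)$ and $r_2(s,z)$. The key observation is that \Cref{lem:tfs} is a statement about \emph{any} \lqo system of the form \cref{eq:def_LQO}, so it applies verbatim to the reduced system $\widehat{\Sigma}_{\lqo}$ once we identify its state-space data.

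First I would invoke \Cref{lem:tfs} under the substitution $(\bA,\bb,\bc,\bK) \mapsto (\bhA,\bhb,\bhc,\hat{\bK})$ with reduced dimension $\cN \mapsto n$. This immediately yields that the first transfer function of $\widehat{\Sigma}_{\lqo}$ is $\bhc^T(s\hat{\bI}-\bhA)^{-1}\bhb$ and its second transfer function is $\hat{\bK}\big[(s\hat{\bI}-\bhA)^{-1}\bhb \otimes (z\hat{\bI}-\bhA)^{-1}\bhb\big]$. Next, I would match these two expressions against $r_1$ and $r_2$: by the state-space identity \cref{r1tf} established in \Cref{r1thm}, the first transfer function equals $r_1(s)$ exactly, and by the state-space identity \cref{r2tf} established in \Cref{r2thm}, the second transfer function equals $r_2(s,z)$ exactly. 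This is the entire content of the claim that the linear and quadratic transfer functions of $\widehat{\Sigma}_{\lqo}$ are $r_1$ and $r_2$. The interpolatory qualifier then follows for free: \cref{singleinterp} (restated in \Cref{r1thm}) gives $r_1(\s_k)=h_k=H_1(\s_k)$, while \cref{h2intpl} in \Cref{r2thm} gives $r_2(\s_i,\s_j)=H_2(\s_i,\s_j)$, so the constructed model reproduces the sampled \lqo data exactly.

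There is no genuine obstacle here, since both transfer-function formulas have already been proven; the only point requiring a word of care is the \emph{consistency} of the construction, namely that the \emph{same} pair $(\bhA,\bhb)$ appears in both $r_1$ and $r_2$. This is exactly why \Cref{r2thm} was forced to build $r_2(s,z)$ around the matrices inherited from \Cref{r1thm} rather than treating it as a free bivariate barycentric form: it is this shared realization that makes the two rational functions the transfer functions of one single \lqo system rather than of two unrelated objects. I would close by noting, as the surrounding text already observes, that such a realization is unique only up to a similarity transformation, so \cref{redLQO} is simply one convenient representative of the constructed data-driven model.
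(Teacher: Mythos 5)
Your proposal is correct and follows essentially the same route as the paper, which states the corollary as an immediate consequence of \Cref{r1thm,r2thm}: you simply make explicit the intended reasoning by applying \Cref{lem:tfs} to the realization $(\bhA,\bhb,\bhc,\hat{\bK})$ and matching the resulting transfer functions against \cref{r1tf} and \cref{r2tf}. Your emphasis on the shared pair $(\bhA,\bhb)$ across both realizations is precisely the point the paper highlights in motivating the form of $r_2(s,z)$ before \Cref{r2thm}.
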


Recall the partitioning of the sampling points in
\eqref{spart}. In  \Cref{r2thm}, we have shown that $r_2(s,z)$ interpolates $H_2(s,z)$ over the sampling set $\bolds \times \bolds$. What is the value of $r_2(s,z)$  over the \emph{mixed} sampling sets  $\bolds \times \boldsh$ and  $\boldsh \times \bolds$? Even though we do not enforce interpolation over these sets, in \Cref{sec:prop_meth} we will need a closed-form expression for the value of $r_2(s,z)$
over $\bolds \times \boldsh$ and  $\boldsh \times \bolds$. The next lemma establishes these results. 
\begin{lemma}\label{Lemma2}
	Let $r_2(s,z)$ be as defined in
	\cref{eq:H2_bary}
	corresponding to the sampling points in
	\cref{spart} and the data in \cref{H2fulldata}. Then,
	{\small
		\begin{align}\label{eval_r2_s_sh}
		\hspace{0mm} r_2(\s_i,\sh_j) &= \frac{\displaystyle  \sum_{\ell=1}^n   \frac{w_\ell h_{i,\ell} }{\sh_j-\s_\ell}}{1+\displaystyle \sum_{\ell=1}^n  \frac{ w_\ell}{\sh_j-\s_\ell}} \ \ ~\mbox{and} ~\ \  r_2(\sh_j,\s_i) = \frac{\displaystyle  \sum_{k=1}^n   \frac{w_k h_{k,i} }{\sh_j-\s_k}}{1+\displaystyle \sum_{k=1}^n  \frac{ w_k}{\sh_j-\s_k}},
		\end{align}
	}
	for $i = 1,\ldots, n$ and $j = 1, \ldots, N_s-n$.
\end{lemma}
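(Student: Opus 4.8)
The plan is to reduce this bivariate evaluation to a univariate one by exploiting the factored form of the denominator of $r_2(s,z)$ that was already obtained while deriving \cref{r2denom} in the proof of \Cref{r2thm}. There the full four-term denominator of \cref{eq:H2_bary} was shown to collapse into a product of two single-variable factors, one in $s$ and one in $z$:
\begin{equation*}
r_2(s,z) = \frac{\displaystyle \sum_{k=1}^n \sum_{\ell=1}^n \frac{h_{k,\ell} w_k w_\ell}{(s-\s_k)(z-\s_\ell)}}{\Big(1+\sum_{k=1}^n \frac{w_k}{s-\s_k}\Big)\Big(1+\sum_{\ell=1}^n \frac{w_\ell}{z-\s_\ell}\Big)}.
\end{equation*}
This product structure is the key ingredient: it decouples the two variables, so the evaluation in the first variable can be carried out independently of the second.

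First I would fix the second variable at the non-support point $z=\sh_j$ and send $s\to\s_i$. Because $\s_i$ lies in the support set $\bolds$, the $s$-factor of the denominator and the double-sum numerator each develop a simple pole at $s=\s_i$ contributed solely by their $k=i$ terms, while every other term stays bounded. Extracting the leading singular parts, the numerator behaves like $\frac{w_i}{s-\s_i}\sum_{\ell=1}^n \frac{h_{i,\ell} w_\ell}{\sh_j-\s_\ell}$ and the $s$-factor of the denominator behaves like $\frac{w_i}{s-\s_i}$. Since $w_i\neq 0$ by hypothesis, the common factor $\frac{w_i}{s-\s_i}$ cancels, the limit exists, and we are left with
\begin{equation*}
r_2(\s_i,\sh_j) = \frac{\displaystyle\sum_{\ell=1}^n \frac{h_{i,\ell} w_\ell}{\sh_j-\s_\ell}}{1+\displaystyle\sum_{\ell=1}^n \frac{w_\ell}{\sh_j-\s_\ell}},
\end{equation*}
which is exactly the first claimed identity. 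Note that the surviving $z$-factor is evaluated at $\sh_j\notin\bolds$, hence is finite and poses no difficulty.

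The second identity follows by symmetry. The factored denominator above is invariant under interchanging the roles of the two variables, so the identical residue argument, now holding the first argument at $\sh_j$ and sending the second to $\s_i$, produces the analogous expression; the only bookkeeping is that the surviving numerator sum now runs over the first index, yielding $h_{k,i}$ in place of $h_{i,\ell}$.

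The step I expect to require the most care is the coincident-pole cancellation at $s=\s_i$: one must check that the singular parts of numerator and denominator share the same leading coefficient $\frac{w_i}{s-\s_i}$, so that their ratio is finite. Working from the product form \cref{r2denom} rather than the original four-term denominator \cref{eq:H2_bary} makes this transparent, since the pole in $s$ is confined to a single scalar factor. As an alternative that avoids limits altogether, one could instead multiply numerator and denominator of \cref{eq:H2_bary} by $(s-\s_i)$ before setting $s=\s_i$, which annihilates every term lacking a $\frac{1}{s-\s_i}$ factor and yields the same reduction purely algebraically.
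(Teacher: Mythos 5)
Your proof is correct, but it takes a genuinely different route from the paper's. The paper's appendix proof clears all denominators at once: it multiplies numerator and denominator of \cref{eq:H2_bary} by the full polynomial $P(s,z)$ to get the polynomial pair $n_2,d_2$ of \cref{formulae_n2_d2}, substitutes $s=\s_i$, $z=\sh_j$ exactly (terms with a factor $(\s_i-\s_i)$ vanish), and then cancels the common factor $w_i P^L_i(\s_i,\sh_j)$ using the relation $P^L_i(\s_i,\sh_j)=P_{i,\ell}(\s_i,\sh_j)(\sh_j-\s_\ell)$. You instead start from the separable product form \cref{r2denom} established inside the proof of \Cref{r2thm}, and run a residue-type limit $s\to\s_i$ in which the simple pole $\tfrac{w_i}{s-\s_i}$ cancels between the numerator and the $s$-factor of the denominator, the $z$-factor staying inert at $\sh_j$. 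What your approach buys is brevity and transparency: the decoupling of variables is explicit, the cancellation is a one-line computation, and the $i\leftrightarrow j$ symmetry of the second identity is immediate; it also reuses an identity the paper had already derived rather than reintroducing the polynomial bookkeeping of \cref{eq:definepij}. What the paper's approach buys is that it is limit-free: since the barycentric form is formally indeterminate at a support point, $r_2(\s_i,\sh_j)$ there means the value after removing the singularity, and the polynomial clearing makes this a purely algebraic evaluation. You correctly flag this subtlety, and your fallback---multiplying through by $(s-\s_i)$ before substituting---closes it; that variant is essentially a lightweight version of the paper's argument, clearing only the one offending factor instead of all of $P(s,z)$. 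Both arguments, yours and the paper's, rely on $w_i\neq 0$ from the setup of \Cref{r1thm}, and both implicitly assume the resulting denominator $1+\sum_{\ell=1}^n \tfrac{w_\ell}{\sh_j-\s_\ell}$ is nonzero, so you are not incurring any hypothesis the paper avoids.
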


\begin{proof}
	Proof is given in \Cref{sec:appendix}.
\end{proof}
{
	It is important to note that the numerators and denominators of $r_2(\s_i,\sh_j)$
	and $r_2(\sh_j,\s_i)$ in \cref{eval_r2_s_sh} are \emph{linear} in the weights $w_\ell$. This is in contrast to 
	the general form of $r_2(s,z)$  in \cref{r2denom} where both the numerator and denominator are quadratic in $w_\ell$ when evaluated over $\boldsh \times \bolds$.}

\section{Proposed framework for data-driven modeling of \lqo systems}
\label{sec:prop_meth}

\Cref{sec:lqobary} established the necessary ingredients to extend \aaa to \lqo systems. Given the measurements 
\cref{H1fulldata} and \cref{H2fulldata}, 
\Cref{r1thm,r2thm} show how to construct the barycentric forms $r_1(s)$ and $r_2(s,z)$ interpolating this data in accordance 
with the partitioning \cref{spart}. Furthermore, \Cref{thisislqo} states that $r_1(s)$ and $r_2(s,z)$ together correspond to an interpolatory \lqo system. Based on these results, in this section we will fully develop the \aaa framework for \lqo systems. The resulting  algorithm will be denoted by \lqoaaa. 

\lqoaaa will be an iterative algorithm, adding one degree of freedom to the current data-driven \lqo model in every iteration step. In the $n$th step,
$r_1^{(n)}(s)$ and $r_2^{(n)}(s,z)$ will correspond to a data-driven order-$n$ \lqo model for the  partitioning of the sampling points in
\cref{spart}. First, for this \emph{current} partitioning, in \Cref{sec:computew},
we introduce a \ls error measure that will be used to choose the barycentric weights $\{w_k\}$ appearing  in the definitions of $r_1^{(n)}(s)$ and $r_2^{(n)}(s,z)$ in
\cref{H1_bary} and \cref{eq:H2_bary}. 
Then, in \Cref{sec:greedy} 
we establish a greedy search procedure for updating the  partitioning  \cref{spart}. The algorithm will then continue with the \ls minimization for the updated partitioning at the $(n+1)$th step to construct 
$r_1^{(n+1)}(s)$ and $r_2^{(n+1)}(s,z)$.
\lqoaaa will terminate after a desired error criterion is met or a maximum allowed order is achieved as explained in \Cref{sec:lqoaaa}.

Even though \Cref{sec:computew} investigates the 
\ls problem in the $n$the of \lqoaaa, to simplify the notation,
we drop the superscript and use $r_1(s)$ and $r_2(s,z)$ instead.  However, they should be understood as the approximants in the $n$th step. We will reintroduce the superscript in \Cref{sec:greedy}.

\subsection{A combined \ls measure for computing the barycentric weights for the current partition} \label{sec:computew}

For the full \lqo data
\cref{H1fulldata} and \cref{H2fulldata}, 
we recall (and repeat) the  partitioning of the sampling points as in \cref{spart}:
{\small 
	\begin{equation}
	\{ s_1, \dots, s_{N_s} \} =
	\{~\s_1, \dots, \s_{n} ~\}  \cup  
	\{~\sh_1, \dots, \sh_{N_s-n}~\} = \bolds ~\cup~ \boldsh. 
	\label{spartrepeat}
	\end{equation}
}
Then, $r_1(s)$ interpolates $H_1(s)$ over $\bolds$ (i.e., it interpolates the data \cref{data_H1}) and $r_2(s,z)$ interpolates $H_2(s,z)$ over  $\bolds \times \bolds$ (i.e., it interpolates the data \cref{data_H2}). Also recall that 
together, $r_1(s)$ and $r_2(s,z)$ correspond to a 
\lqo  system.
The only remaining degrees of freedom in defining $r_1(s)$ and $r_2(s,z)$, and thus the corresponding \lqo  system are the barycentric weights $\{w_1,\ldots,w_n\}$. We will choose those weights to minimize an appropriate error 
measure in the \emph{uninterpolated} data corresponding to the sampling points $\boldsh$. We first introduce the notation for  these uninterpolated  values\footnote{Since the evaluation of the uninterpolated $H_2(s,z)$ values 
	occur over three different sets, namely 
	$\bolds \times \boldsh$, 
	$\boldsh \times \bolds$, and $\boldsh \times \boldsh$, we use a superscript to distinguish them. Recall that the interpolated values $h_{i,j} = H_2(\s_i,\s_j)$ are over $\bolds \times \bolds$ only and thus the superscript notation is avoided for 
	$h_{i,j}$.}:
{
	\begin{align} \label{data_H1hat}
	\hspace{-8mm}  \hh_i \eqdef H_1(\sh_i)\phantom{,\sh_j}~~\mbox{for}~i=1,2,\ldots,N_s-n,\hspace{17mm} \\
	\hspace{-5mm} \hij  \eqdef H_2(\sh_i,\sh_j) \label{data_H2hat} ~~\mbox{for}~i,j=1,2,\ldots,N_s-n,\hspace{14mm}\\
	\label{data_H2_nonhat_hat}
	\hspace{+2mm} \gij  \eqdef H_2(\s_i,\sh_j)~~\mbox{for}~i = 1,\ldots,n, ~j=1,\ldots,N_s-n, \\
	\label{data_H2_hat_nonhat}
	\hspace{+2mm} \fji \eqdef H_2(\sh_j,\s_i)~~\mbox{for}~j = 1,\ldots,N_s-n, ~i=1,\ldots,n.
	\end{align}}
We denote with $\bw \in \mathbb{C}^n$ the vector of weights to be determined, as
$$
\bw = \begin{bmatrix} w_1 & w_2 &\ldots &w_n \end{bmatrix}^T.
$$
A reasonable error measure to minimize is the \ls distance in the uninterpolated data, leading to the minimization problem 
\begin{align} \label{truenls}
\min\limits_{\bw \neq \mathbf{0}} \left(\J_1+ \J_2 + \J_3 + \J_4\right)
\end{align}
where 
\begin{align} 
\hspace{-2mm} \J_1 & = {\frac{1}{N_s-n}}\sum_{i=1}^{N_s-n} (r_1(\sh_i) -  \hh_i)^2,  \label{J1term} \\ 
\hspace{-2mm} \J_2 & ={\frac{1}{n(N_s-n)}} \sum_{i=1}^{n} \sum_{j=1}^{N_s-n} (r_2(\s_i,\sh_j) -\gij  )^2, \label{J2term}\\ 
\hspace{-2mm} \J_3  &= {\frac{1}{(N_s-n)n}} \sum_{i=1}^{N_s-n} \sum_{j=1}^{n} (r_2(\sh_i,\s_j) - \fji )^2,~\mbox{and}  \label{J3term} \\ 
\hspace{-2mm} \J_4  &= {\frac{1}{(N_s-n)^2}} \sum_{i=1}^{N_s-n} \sum_{j=1}^{N_s-n} (r_2(\sh_i,\sh_j) - \hij )^2.\label{J4term}
\end{align}
As in the original \aaa for linear dynamical systems, the \ls problem \cref{truenls}
is nonlinear in $\bw$ for \lqo systems. The formulation is  more complicated here due to the additional $r_2(s,z)$ term. To resolve this numerical difficulty, we will employ a strategy, similar to the lineraziation step in \cref{linearaaa}, and solve a  
relaxed optimization problem. However, 
the resulting \ls problem in our case will still be nonlinear, yet much easier to solve than \cref{truenls}. In the end, we will tackle the original nonlinear \ls problem \cref{truenls} by solving a sequence of quadratic  \ls problems. 
We note that in \cref{J1term}-\cref{J4term}, we scale every error term $\J_i$ with the number of data points in it. 
\subsubsection{Quadraticized \ls problem in Step $n$}
In this section, we show how to relax the each term, $\J_i$, in the nonlinear \ls problem \cref{truenls}. The resulting problem will then constitute a crucial component in the proposed iterative algorithm (\Cref{sec:lqoaaa}).
\paragraph{\textbf{Linearizing $\J_1${\textrm :}}} \label{linj1}
Note that the $i$th term of $\J_1$ in \cref{J1term}, namely 
$r_1(\sh_i) -  \hh_i$, is the same as the term in \cref{linearaaa} appearing in \aaa. This is natural since 
$r_1(s)$ corresponds to the linear part of the \lqo system. Therefore, we can linearize $\J_1$ similar to \cref{linearaaa}. 
Write $r_1(s)$ as $r_1(s) = n_1(s)/d_1(s)$, as defined in \eqref{H1_bary}. Then,
the $i$th term in \cref{J1term} is linearized as 
\begin{equation} \label{linearJ1}
r_1(\sh_i) - \hh_i
= 
\dfrac{1}{d_1(\sh_i)} \left(n_1(\sh_i) - \hh_i d_1 (\sh_i)  \right) 
\rightsquigarrow 
n_1 (\sh_i)-   \hh_i d_1 (\sh_i). 
\end{equation}
Substituting $n_1(s)$ and $d_1(s)$ from the definition of $r_1(s)$  
in \cref{H1_bary} into \cref{linearJ1}, one obtains 
\begin{align}
\nonumber
n (\sh_i)-   \hh_i d (\sh_i) &=
\sum_{k=1}^n \frac{ w_k h_k }{\sh_i - \s_k } - 
\hh_i \left(
1+ \sum_{k=1}^n \frac{w_k }{\sh_i- \s_k} \right)
\\
&=\sum_{k=1}^n \frac{w_k(h_k-\hh_i)}{\sh_i- \s_k} - \hh_i. 
\end{align}
For a matrix $\bX$, let $\left(\bX\right)_{ij}$ denote its $(ij)$th entry. Similarly, for a vector $\bx$, let $\left(\bx\right)_{i}$ denote its $i$th entry. Then, 
define the Loewner matrix $\IL \in \mathbb{C}^{(N_s-n)\times n}$ with
\begin{equation}
\left(\IL\right)_{ik} = \frac{\hh_i-h_k}{\sh_i- \s_k},~~\mbox{for}~~
i=1,\ldots,N_s,~k=1,\ldots,n,
\end{equation}
and  the vector $\bhh \in \mathbb{C}^{N_s-n}$ with $\left(\bhh\right)_i = \hh_i$.  Then, 
$$
\sum_{i=1}^{N_s-n} \left( n (\sh_i)-   \hh_i d (\sh_i) \right)^2
= \Vert \IL \bw +  \bhh \Vert_2^2.
$$
Therefore, the $\J_1$ term in \cref{J1term} will be relaxed to
\begin{align}  \label{J1relax}
\J_1  \longrightsquigarrow  {\frac{1}{N_s-n}} \Vert \IL \bw +  \bhh \Vert_2^2.
\end{align}

\paragraph{\textbf{Linearizing $\J_2$ and $\J_3${\textrm :}}}  \label{linj2j3}

Now we extend the linearization strategy used in $\J_1$, which only involved the single-variable function $r_1(s)$,  to the error terms $\J_2$ and $\J_3$, which involve $r_2(s,z)$.  The closed-form expressions for $ r_2(\s_i,\sh_j)$ and $r_2(\sh_j,\s_i)$ we derived in \Cref{Lemma2} will prove fundamental in achieving these goals. 

We start with $\J_2$. Write $r_2(s,z) = n_2(s,z)/d_2(s,z)$ as  in \cref{r2n2d2}.
Then, the linearizing the $(ij)th$ term in \eqref{J2term} means 
\begin{align}  
\hspace*{-1ex}      r_2(\s_i,\sh_j) - \gij 
& = 
\dfrac{1}{d_2(\s_i,\sh_j)} \left(n_2(\s_i,\sh_j) - \gij d_1 (\s_i,\sh_j)  \right) \nonumber \\
&\rightsquigarrow 
n_2 (\s_i,\sh_j)-   \gij d_2 (\s_i,\sh_j). \label{linearJ2}
\end{align}
We substitute $n_2(\s_i,\sh_j)$ and $d_2(\s_i,\sh_j)$ from   \cref{eval_r2_s_sh} into \cref{linearJ2} to obtain
\begin{align}
n_2 (\s_i,\sh_j) & -   \gij d_2 (\s_i,\sh_j) \nonumber \\
& = \sum_{\ell=1}^n   \frac{w_\ell h_{i,\ell} }{\sh_j-\s_\ell} -  \gij
\left({1+ \sum_{\ell=1}^n  \frac{ w_\ell}{\sh_j-\s_\ell}}\right) \nonumber \\
& = -\left( \sum_{\ell=1}^n \frac{ w_\ell(\gij-h_{i,\ell})}{\sh_j- \s_\ell}  + \gij \right). \label{n2ghatd2}
\end{align}
Define the indexing variable $\alpha_{ij} = (i-1)(N_s-n)+j$ and let $\got \in \mathbb{C}^{n(N_s-n)}$ be the vector defined such that
\begin{equation} \label{ghat2}
\left(\got\right)_{\alpha_{ij}}= \gij~\mbox{for}~
1 \leqslant i \leqslant n ~\mbox{and}~ 1 \leqslant j\leqslant N_s-n.
\end{equation}
Define the  Loewner matrix $\IL^{(1,2)} \in \mathbb{C}^{n(N_s-n)\times n}$ with entries 
\begin{equation}
\left( \Lonetwo\right)_{\alpha_{ij}\ell} =
\frac{\gij-h_{i,\ell}}{\sh_j- \s_\ell},
\label{L12}
\end{equation}
for
$1 \leqslant i \leqslant n, \ 1 \leqslant j\leqslant N_s-n$, and 
$1 \leqslant \ell \leqslant n$. Then, 
using \eqref{L12} and \eqref{ghat2} in 
\eqref{n2ghatd2}, we obtain 
$$
\sum_{i=1}^n\sum_{j=1}^{N_s-n}
\left(n_2 (\s_i,\sh_j)  -   \gij d_2 (\s_i,\sh_j)\right)^2 =
\Vert \Lonetwo \bw +  \got \Vert_2^2,
$$
yielding the linearization of $\J_2$:
\begin{align} \label{J2relax}
\J_2  \longrightsquigarrow  {\frac{1}{(N_s-n)n}} \left\| \Lonetwo \bw +  \got \right\|_2^2.
\end{align}
Using similar arguments and the explicit formula for the expression $r_2(\sh_j,\s_i)$ in
\cref{eval_r2_s_sh}, the $\J_3$ term in \cref{J3term} is linearized to 
\begin{align} \label{J3relax}
\J_3  \longrightsquigarrow  \frac{1}{(N_s-n)n} \left\|\Ltwoone \bw +  \gto \right\|_2^2,
\end{align}
where the Loewner matrix $\IL^{(2,1)} \in \mathbb{C}^{n(N_s-n)\times n}$ and
the vector $\gto \in \mathbb{C}^{n(N_s-n)}$ are defined as 
\begin{align*}
\left(\Ltwoone \right)_{\gamma_{ji}k} = \frac{\fji-h_{k,i}}{\sh_j- \s_k}
\quad\mbox{and}\quad (\gto)_{\gamma_{ji}} = \fji,
\end{align*}
with $1 \leqslant j \leqslant N_s-n, \ 1 \leqslant i\leqslant n$, $1 \leqslant k \leqslant n$, and $\gamma_{ji} = (j-1)n+i$.


\paragraph{\textbf{Quadraticizing the $\J_4$ term{\textrm :}}} \label{linj4}

In this section we show how to relax the remaining term, $\J_4$, in the minimization problem \cref{truenls}. Note that this term includes $r_2(\sh_i,\sh_j)$; i.e., $r_2(s,z)$ evaluated over $\boldsh \times \boldsh$. As we stated earlier, unlike 
$r_2(\s_i,\sh_j)$ ($r_2(s,z)$  over $\bolds \times \boldsh$) or $r_2(\sh_i,\s_j)$ ($r_2(s,z)$  over $\bolds \times \boldsh$), the numerator and denominator of the quantity $r_2(\sh_i,\sh_j)$ is quadratic in the weights $w_\ell$. Therefore,  relaxing the $(ij)$th term in $\J_4$ via multiplying it out with its denominator, will not yield a linear term, but rather a quadratic, i.e., even the relaxed problem cannot be solved as a linear \ls problem.
This is what we establish next. 

Similar to \eqref{linearJ2}, relax the $(ij)$th term in
\eqref{J4term} using 
\begin{align}  
\hspace{-1ex}
r_2(\sh_i,\sh_j) - \hij 
& = 
\dfrac{1}{d_2(\sh_i,\sh_j)} \left(n_2(\sh_i,\sh_j) - \hij d_1 (\sh_i,\sh_j)  \right) \nonumber \\
&\rightsquigarrow 
n_2 (\sh_i,\sh_j)-   \hij d_2 (\sh_i,\sh_j). \label{linearJ4}
\end{align}
Using \eqref{r2denom}, we obtain
{\small
	\begin{equation}  \label{n2d2shatshat}
	r_2(\sh_i,\sh_j) = \frac{ \sum_{k=1}^n \sum_{\ell=1}^n \frac{ w_k w_\ell h_{k,\ell} }{(\sh_i - \s_k)(\sh_j - \s_\ell) }}{(1+ \sum_{k=1}^n \frac{w_k }{\sh_i- \s_k})(1+ \sum_{\ell=1}^n \frac{w_\ell }{\sh_j- \s_\ell})} 
	= \frac{ n_2 (\sh_i,\sh_j)}{d_2 (\sh_i,\sh_j)}.
	\end{equation}
}
Inserting $n_2 (\sh_i,\sh_j)$ and $d_2 (\sh_i,\sh_j)$
from \eqref{n2d2shatshat} into \eqref{linearJ4} and re-arranging 
the terms yields
\begin{align}
n_2 (\sh_i,&\sh_j)-   \gij d_2 (\s_i,\sh_j) \nonumber \\
= &-\left( \sum_{k=1}^n \sum_{\ell=1}^n \frac{w_k w_\ell(\hij-h_{k,\ell})}{(\sh_i- \s_k)(\sh_j- \s_\ell)} \right. \nonumber \\
& \quad \quad \left. +  \sum_{k=1}^n \frac{w_k \hij}{\sh_i- \s_k} + \sum_{\ell=1}^n \frac{w_\ell \hij}{\sh_j- \s_\ell} - \hij \right). \label{n2hhatd2}
\end{align}
Note that the  expression in \eqref{n2hhatd2} is quadratic in $w_k$, as anticipated. 

As we did for the $\J_1$, $\J_2$ and $\J_3$, to express the resulting expression more compactly in matrix form,
we introduce the (2D) Loewner matrix
$\Ltwod  \in \mathbb{C}^{(N_s-n)^2 \times n^2}$ as 
\begin{equation} \label{L2d}
(\Ltwod)_{\alpha_{ij}  \beta_{k\ell}} =  \frac{\hij-h_{k,\ell}}{(\sh_i- \s_k)(\sh_j - \s_\ell)},
\end{equation}
where $\alpha_{ij} = (i-1)(N_s-n)+j$  and $\beta_{k\ell} = (k-1)n+\ell$ with 
$i,j \in \{1,2,\ldots, N_s-n\}$ and $k,\ell \in \{1,2,\ldots,n\}$.
Then, the $\alpha_{ij}$th entry of the vector $\Ltwod (\bw \otimes \bw) \in \mathbb{C}^{(N_s-n)^2}$ is  
\begin{equation}
\Big{(} \Ltwod (\bw \otimes \bw) \Big{)}_ {\alpha_{ij}} = -\sum_{k=1}^n \sum_{\ell=1}^n \frac{w_k w_\ell(h_{k,\ell}-\hij)}{(\sh_i- \s_k)(\sh_j- \s_\ell)},
\end{equation}
thus recovering the first sum in \eqref{n2hhatd2}.
Next, introduce the matrices $\bU_1, \bU_2 \in \mathbb{C}^{(N_s-n)^2 \times n}$ such that 
for $1\leqslant k,\ell \leqslant n$,
\begin{align}\label{def_U1_U2}
(\bU_1)_{\alpha_{ij}k} = \frac{\hij}{\sh_i- \s_k} \quad \mbox{and}\quad (\bU_2)_{\alpha_{ij}\ell} = \frac{\hij}{\sh_j- \s_\ell}.
\end{align}
Using $\bU_1$ and $\bU_2$ in \cref{def_U1_U2}, the last two sums in \eqref{n2hhatd2} can be compactly written as
\begin{align}
\sum_{k=1}^n \frac{w_k \hij}{\sh_i- \s_k} = \bU_1 \bw \quad \mbox{and}\quad  \sum_{\ell=1}^n \frac{w_\ell \hij}{\sh_j- \s_\ell}  = \bU_2 \bw.
\end{align}
Define $\bU = \bU_1+\bU_2$. Then using \cref{def_U1_U2}, we write
\begin{equation}\label{def_U}
(\bU)_{\alpha_{ij}k} =(\bU_1)_{\alpha_{ij}k} + (\bU_2)_{\alpha_{ij}k} = \frac{\hij(\sh_i+\sh_j-2\s_k)}{(\sh_i- \s_k)(\sh_j- \s_k)}.
\end{equation}
Insert \eqref{L2d} and \eqref{def_U} 
into \eqref{n2d2shatshat}  obtain 
\begin{align} \nonumber
\sum_{i=1}^{N_s-n} \sum_{j=1}^{N_s-n}
&(n_2 (\sh_i,\sh_j)  -   \hij d_2 (\s_i,\sh_j))^2  \\ & =
\left \| \Ltwod  ( \bw \otimes \bw) + \bU \bw + \gtt 
\right\|_2^2, \label{J4termnew}
\end{align}
where 
$ \gtt \in \mathbb{C}^{(N_s-n)^2}$ is the vector defined as 
\begin{equation}
(\gtt)_{\alpha_{ij}} = \hij,
\end{equation}
with $\alpha_{ij} = (i-1)(N_s-n)+j$ as before and $1 \leqslant i,j\leqslant N_s-n$.
The expression \eqref{J4termnew} yields the final relaxation of $\J_4$:
\begin{align}   \label{J4relax}
\J_4  \longrightsquigarrow  {\frac{1}{(N_s-n)^2}} \left\| \Ltwod ( \bw \otimes \bw) + \bU \bw + \gtt \right\|_2^2 .
\end{align}

\subsubsection{Solving the optimization problem in Step $n$}
Combining the relaxations $\J_1$, $\J_2$, $\J_3$, and $\J_4$ as given in \eqref{J1relax}, \eqref{J2relax}, \eqref{J3relax}, and \eqref{J4relax},
at the $n$th step  of the algorithm,
we need to solve the quadraticized minimization problem
\begin{align}
\hspace{-15mm}\min\limits_{\bw} \left\{ \rho_1 \left\| \IL \bw + 
\bhh \right\|_2^2 \right. \hspace{40mm} \nonumber \\  \hspace{5mm} + \rho_2\left( \left\|\Lonetwo \bw +  \got \right\|_2^2
+   \left\| \Ltwoone \bw +  \gto \right\|_2^2 \right) + \label{min_prob_1D_2D} \\
\left. \rho_3 \left\| \Ltwod ( \bw \otimes \bw) + \bU \bw + \gtt
\right\|_2^2 \right\}, \nonumber
\end{align}
where 
\begin{equation}
\rho_1 = \frac{1}{N_s-n},~\rho_2 =\frac{1}{(N_s-n)n},~\mbox{and}~\rho_3= \frac{1}{(N_s-n)^2}.
\end{equation}
Note that due to the last term, the optimization problem \eqref{min_prob_1D_2D}
is no longer a linear \ls problem, nevertheless  can be solved efficiently. One can explicitly compute the gradient (and Hessian) of the cost function and can apply a well-established (quasi)-Newton formulation \cite{nocedal2006numerical}. If we were to have a one-step algorithm whose solution is given by 
\eqref{min_prob_1D_2D}, one would employ these techniques.  However, note that solving \eqref{min_prob_1D_2D} is only  one step of our proposed iterative algorithm. Hence, as the iteration continues (and $n$ increases) the vector $\bw$ (and the data-partition) will be updated and the new optimization problem with a larger-dimension needs to be solved. Therefore, we will approximately solve
\eqref{min_prob_1D_2D} in every step.

One can obtain an approximate solution to \eqref{min_prob_1D_2D} in various ways. In our formulation, we will first solve part of the problem \eqref{min_prob_1D_2D} that can be written as  a linear least-squares problem in $\bw$, namely
{
	\begin{align}
	\hspace{-15mm}\min\limits_{\bw} \left\{ \rho_1 \left\| \IL \bw + 
	\bhh \right\|_2^2 \right. \hspace{40mm} \nonumber \\  \hspace{5mm} + \rho_2\left( \left\|\Lonetwo \bw +  \got \right\|_2^2
	+   \left\| \Ltwoone \bw +  \gto \right\|_2^2 \right). \label{lin_prob}
	\end{align}
}
The optimization problem \eqref{lin_prob} is a classical linear least-squares problem:
\begin{equation}\label{sol_lin_prob}
\tilde{\bw} = {\displaystyle {\underset {\bw}{\operatorname {arg\,min} }}}
\left\|~ 
\left[ \begin{array}{l}
\rho_1 \IL \\ \rho_2\Lonetwo \\ \rho_2\Ltwoone
\end{array} \right]  \bw + \left[ \begin{array}{l}
\bhh \\ \got \\ \gto 
\end{array} \right]~
\right \|_2.
\end{equation}
Using  $\tilde{\bw}$, we further relax the 
last term in \cref{min_prob_1D_2D}
as 
\begin{align} \nonumber
\rho_3 \Vert \Ltwod ( {\bw} \otimes \bw) &+ \bU \bw + \gtt \Vert_2^2  \longrightsquigarrow \\ 
&\rho_3 \Vert \Ltwod ( \tilde{\bw} \otimes \bw) + \bU \bw + \gtt \Vert_2^2.
\label{lastrelax}
\end{align} 
Using  $\Ltwod ( \tilde{\bw} \otimes \bw) = \Ltwod ( \tilde{\bw} \otimes \bI) \bw $, we rewrite
\eqref{lastrelax} as
\begin{align}\label{min_prob_1D_2D_2}
\hspace{-2mm} \rho_3 \Vert \Ltwod ( \tilde{\bw} \otimes \bw) + \bU \bw + \gtt \Vert_2^2  =
\rho_3
\Vert \IT \bw + \gtt
\Vert_2^2,
\end{align} 
where the matrix $\IT \in \mathbb{C}^{(N_s-n)^2 \times n}$ is defined as follows
\begin{equation}\label{formula_T}
\IT = \Ltwod ( \tilde{\bw} \otimes \bI)  + \bU.
\end{equation}
Then, using  \cref{min_prob_1D_2D_2}
in place of the last term in 
\eqref{min_prob_1D_2D},  we obtain a minimization problem that is now a linear  \ls problem. Thus, the solution to our final approximation to
\eqref{min_prob_1D_2D} is given by
\begin{equation}\label{w_new}
{\bw}_{\star} = {\displaystyle {\underset {\bw}{\operatorname {arg\,min} }}}
\left \Vert ~\left[ \begin{array}{l}
\rho_1\IL \\ \rho_2 \Lonetwo  \\\rho_2 \Ltwoone  \\ \rho_3 \IT
\end{array} \right] \bw +\left[ \begin{array}{l}
\bhh \\  \got \\ \gto \\ \gtt
\end{array} \right]~  \right \Vert_2. 
\end{equation}
Therefore, in the $n$th step of \lqoaaa, the optimization problem \eqref{truenls}  is relaxed and the solution of this relaxed problem (the weights) is given by \eqref{w_new}. The algorithms proceeds with the updated weights as we discuss next.

\subsection{Partition update via the greedy selection}   \label{sec:greedy}
Given the partition \eqref{spartrepeat} in the Step $n$ of the algorithm, 
\Cref{sec:computew} showed how to choose the barycentric weights 
$\bw$ to minimize a joint \ls measure over the uninterpolated data set. The only remaining component of the proposed approach is, then, to 
choose the next support point $\xi_{n+1}$ and 
update the data partition  \eqref{spartrepeat} (so that we repeat \Cref{sec:computew} for the updated partition until a desired tolerance achieved.)
In other words, we will move one sampling point from the \ls set 
$\boldsh$ to the interpolation set $\bolds$. Which point to move from
$\boldsh$ to $\bolds$ will be done in a greedy manner. 
To emphasize the iterative nature of the overall algorithm,  at this Step $n$ of the algorithm,
we will denote by $r_1^{(n)}(s)$ and $r_2^{(n)}(s,z)$ the two transfer functions of the current \lqo approximant. (Note that we dropped the superscript in   \Cref{sec:computew}  to simplify the notation there.)

We start by  defining two constants based on the data:
\begin{align}\label{def_M1_M2}
M_1 &= \max\limits_{s \in \Omega} \vert H_1(s)  \vert, \quad
M_2 = \max\limits_{s\in \Omega, z \in \Omega} \vert H_2(s,z) \vert.
\end{align}
For the current approximant in Step $n$, introduce the absolute error measures, deviations in the linear and quadratic parts:
\begin{align}\label{rel_err_meas_step_n}
\begin{split}
\epsilon_1^{(n)} &= \max\limits_{s \in \Omega} \vert H_1(s) - r_1^{(n)}(s) \vert, \\
\epsilon_2^{(n)} &= \max\limits_{s,z \in \Omega} \vert H_2(s,z) - r_2^{(n)}(s,z) \vert.
\end{split}
\end{align}
The next support point $\s_{n+1}$ is chosen
by means of a greedy search over the set  $\Omega \setminus \{\s_{1},\ldots,\s_n\}$ 
using the error measures $\epsilon_1^{(n)}$ and 
$\epsilon_2^{(n)}$. More precisely, if $\epsilon_1^{(n)}/N>\epsilon_2^{(n)}/N^2$, then
$\s_{n+1} = {\displaystyle {\underset {s\in \Omega}{\operatorname {arg\,max} }}\,\vert H_1(s) - r_1^{(n)}(s) \vert}$. On the other hand, if $\epsilon_1^{(n)}/N<\epsilon_2^{(n)}/N^2$, define $s^{(n+1)}$ and 
$z^{(n+1)}$  using 
$$
(s^{(n+1)},z^{(n+1)})= {\displaystyle {\underset {s,z\in \Omega}{\operatorname {arg\,max} }}\,\vert H_2(s,z) - r_2(s,z)} \vert.
$$
Now the question is whether choose $s^{(n+1)}$  or $z^{(n+1)})$ as $\s_{n+1}$. If only one of them was already a support point, then we choose the other one as $\s_{n+1}$.
If neither $s^{(n+1)}$ nor $z^{(n+1)}$ was previously chosen as a support point, then we compare $\vert H_1(s^{(n+1)}) - r_1^{(n)}(s^{(n+1)}) \vert$ and  $\vert H_1(z^{(n+1)}) - r_1^{(n)}(z^{(n+1)}) \vert$, and choose $\s_{n+1}$ as the one that yields the higher deviation in the first transfer function. Clearly, both cannot be already a support point due to the interpolation property. 

\begin{remark}
	{Instead of considering the full grid of pairs of sampling points $(s,z)$ and the associated measurements, we could consider a sparser grid for $H_2(s,z)$ samples. This modification would require  changing the greedy selection scheme accordingly to make sure that all possible combinations of selected points appear in the sparser grid. We skip this aspect in our examples and work with the full data set.}
\end{remark}

\subsection{The proposed algorithm: \textsf{AAA-LQO}}
\label{sec:lqoaaa}
Now, we have all the pieces to describe the algorithmic framework for the proposed method \textsf{AAA-LQO}, the \aaa algorithm for \lqo systems. 

Given the full \lqo data \cref{H1fulldata} and \cref{H2fulldata}, 
we initiate the approximant ($n=0$) by choosing  $r_1^{(0)}(s)$ as the average of $H_1(s)$ samples and  $r_2^{(0)}(s,z)$ as the average of $H_2(s,z)$ samples. 
Then, using the greedy selection strategy of \Cref{sec:greedy} we update the partition  \eqref{spartrepeat} and solve for the barycentric weights as in \Cref{sec:computew}, more specifically using  \eqref{w_new}. 
Let $n_{\textrm max}$ denote the largest dimension permitted for the data-driven
\lqo approximant $\widehat{\Sigma}_{\lqo}$ and 
and let $\epsilon$ denote the relative error tolerance.
Then, \textsf{AAA-LQO} terminates either when the prescribed dimension $n_{\textrm max}$ is reached, or when the prescribed error tolerance is achieved, namely
\begin{equation} \label{end_condition}
\max(   \epsilon_1^{(n)}/M_1,    \epsilon_2^{(n)}/M_2) < \epsilon.
\end{equation}
In \Cref{sec:numerics}, we depict the evolution of $ \epsilon_1^{(n)}/M_1$ and $ \epsilon_2^{(n)}/M_2$ during the \textsf{AAA-LQO} iterations. A sketch of \textsf{AAA-LQO} is given in \Cref{LQO_AAA_alg}.
\begin{algorithm}[htp] 
	\caption{\textsf{AAA-LQO}: \aaa algorithm for \lqo systems}  
	\label{LQO_AAA_alg}                                     
	\algorithmicrequire~\\
	\hspace*{3mm} Sampling points $\{s_1,\ldots,s_{N_s}\}$, and 
	samples $\{H_1(s_i)\}$ and
	$\{H_2(s_i,s_j)\}$ of an
	\lqo system; \\
	\hspace*{3mm} Maximum dimension  allowed $n_{\textrm max}$; \\
	\hspace*{3mm} Stopping tolerance $\epsilon$.\\
	
	\algorithmicensure~\\
	\hspace*{3mm} data-driven \lqo system $\widehat{\Sigma}_{\lqo}$ as in  \cref{redLQO}.\\[1ex]
	\hspace*{2mm}{\small 0:}
	$n=0$,
	$r_1^{(0)} =
	\textsf{avg}\{ H_1(s_i)\}$, and $r_2^{(0)} = \textsf{avg}\{H_2(s_i,s_j) \}$
	\\
	\algorithmicwhile~   $\max(   \epsilon_1^{(n)}/M_1,    \epsilon_2^{(n)}/M_2) > \epsilon \  \ \text{and} \ \ n<r_{\textrm max}$  \label{tol_cond}
	\begin{algorithmic} [1] 
		\vspace{-2mm}
		\STATE Employ the greedy selection scheme to choose the next support point(s)
		and update the partitioning
		as described in \Cref{sec:greedy}.
		\STATE Compute the vector of weights $\bw_{\star}$ as in \cref{w_new}.
		\STATE Update $r_1^{(n)}(s)$ and $r_2^{(n)}(s,z)$, and
		compute the errors $\epsilon_1^{(n)}$ and $\epsilon_2^{(n)}$ as in
		\eqref{rel_err_meas_step_n}.
		\STATE $n =n+1$
		\cref{rel_err_meas_step_n}.\\
		\hspace*{-3.25ex}\algorithmicend
	\end{algorithmic}
\end{algorithm}

\begin{remark}
	Note that, by choosing complex-conjugate sampling points and sampled values, one can enforce the fitted models to be real-valued. This is actually enforced for both examples presented in \Cref{sec:numerics}.
\end{remark}

\section{Numerical examples}\label{sec:numerics}

We test \lqoaaa, as given in \Cref{LQO_AAA_alg}, on two \lqo systems. We also apply the original \aaa algorithm (from the linear case) to the data corresponding to the first (linear) transfer function only.  Therefore, we construct two approximants: (1) A data-driven \lqo approximant of order-$n$ using \lqoaaa and (2) A data-driven linear approximant using \aaa.
Note that both approximants are real-valued, enforced by using a data set that is closed under complex conjugation.

\subsection{Example 1}\label{sec:iss_ex}

First, we use a single-input/single-output version of the ISS {1R Model}  from the SLICOT MOR benchmark collection \cite{morChaV02}.  We construct a \lqo system from this linear model by adding a quadratic output with the choice of  $\bM = 0.6 \bI_{270}+ 0.3\bI^{(-1)}_{270}+0.3\bI^{(+1)}_{270} \in \mathbb{R}^{270 \times 270}$, which scales the product of the state variable with itself, in the output equation. Here, $\bI^{(k)}_{270}$ denotes a quasi-diagonal matrix for which the  entries of ones are shifted from the main diagonal based on the integer $k$ ($k>0$ stands for upper shifting, while $k<0$ is used for lower shifting - also, note that $\bI^{(0)}_{270}= \bI_{270}$).

We collect the following data: pick $60$
logarithmically-spaced points in the interval 
$[10^{-1},10^2] \iop$ and add its conjugate pairs in $[-10^{-2},10^1)] \iop$ to have $N_s=120$ sampling points $\{s_i\}$ and the samples $\{H_1(s_i)\}$ for $i=1,2,\ldots,N_s$
as in \eqref{H1fulldata}.  Then, as in 
\eqref{H2fulldata}, we sample the second-transfer function at $H_2(s_i,s_j)$ for $i,j=1,2,\ldots,N_s$. The sampled data are depicted in \Cref{fig:1}, where we  display  the measurements evaluated only on the ``positive side" of the imaginary axis and skipping the conjugate data. 
\begin{figure}[h]
	\hspace{-6mm}
	\includegraphics[scale=0.235]{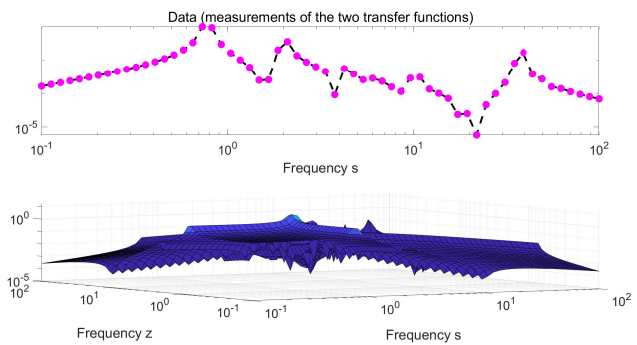}
	\vspace{-4mm}
	\caption{Measurements corresponding to the transfer functions; $H_1(s)$ (top) and $H_2(s,z)$ (bottom).}	\label{fig:1}
	\vspace{-2mm}
\end{figure}

We apply \Cref{LQO_AAA_alg}  with $n_{\textrm max}=30$ 
and $\epsilon = 10^{-2}$ (relative tolerance value corresponding to $99\%$ approximation error on the data).
With these variables, \lqoaaa yields a data-driven \lqo model of order $n=18$. 

Using only the $\{ H_1(s_i)\}$ samples (corresponding to the linear observation map), we  apply \aaa and obtain a data-driven linear approximant of order $n=18$. The \aaa approximant is constructed to simply illustrate that a linear dynamical system approximation is not sufficient to accurately represent the underlying \lqo system.

In the top plot of \Cref{fig:2}, we show the magnitude of the first transfer function $H_1(s)$ of the original system together with that of the linear \aaa model and the first transfer function
($r_1^{(n)}(s)$) of the \lqoaaa model. As expected, \aaa model does a good job in matching 
the linear part of the output. Similarly, the \lqoaaa model also
matches $H_1(s)$ accurately. To better illustrate this, 
in the bottom plot of \Cref{fig:2}, we depict the magnitude of the approximation errors in $H_1(s)$. The plot reveals that the \lqoaaa model has a smaller error for most of the frequency values, even in approximating  $H_1(s)$. This happens despite the fact that it focuses on both $H_1(s)$ and $H_2(s,z)$ unlike the \aaa model, which only tries to approximate $H_1(s)$.

\begin{figure}[h]
\hspace{-6mm}
		\includegraphics[scale=0.24]{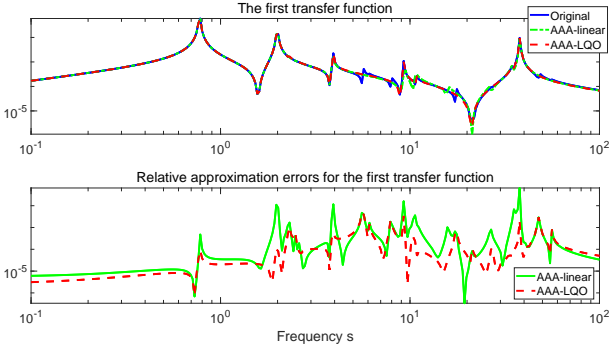}
	\vspace{-4mm}
	\caption{First transfer function approximation.}
	\label{fig:2}
	\vspace{-2mm}
\end{figure}

In \Cref{fig:3} we depict the selected support points (interpolation points) for both \aaa and \lqoaaa algorithms (without the complex conjugate pairs), as well as the poles of the learned models (i.e., the eigenvalues of $\bA_r$ in both cases). Note that there are  $9$ complex conjugate pairs of support points for each method. Even though some of the support points of \aaa and \lqoaaa overlap, two of the pairs are different. This difference causes a big deviation in the  the pole pattern as shown in the bottom plot, illustrating  that even the linear part of the \lqoaaa approximant, i.e., $r_1^{(n)}(s)$, is fundamentally different than the linear \aaa model. This is expected since \lqoaaa constructs $r_1^{(n)}(s)$ and  $r_2^{(n)}(s,z)$ together by minimizing a joint \ls measure in both $H_1(s)$ and $H_2(s,z)$.

\begin{figure}[h]
	\hspace{-6mm}		\includegraphics[scale=0.24]{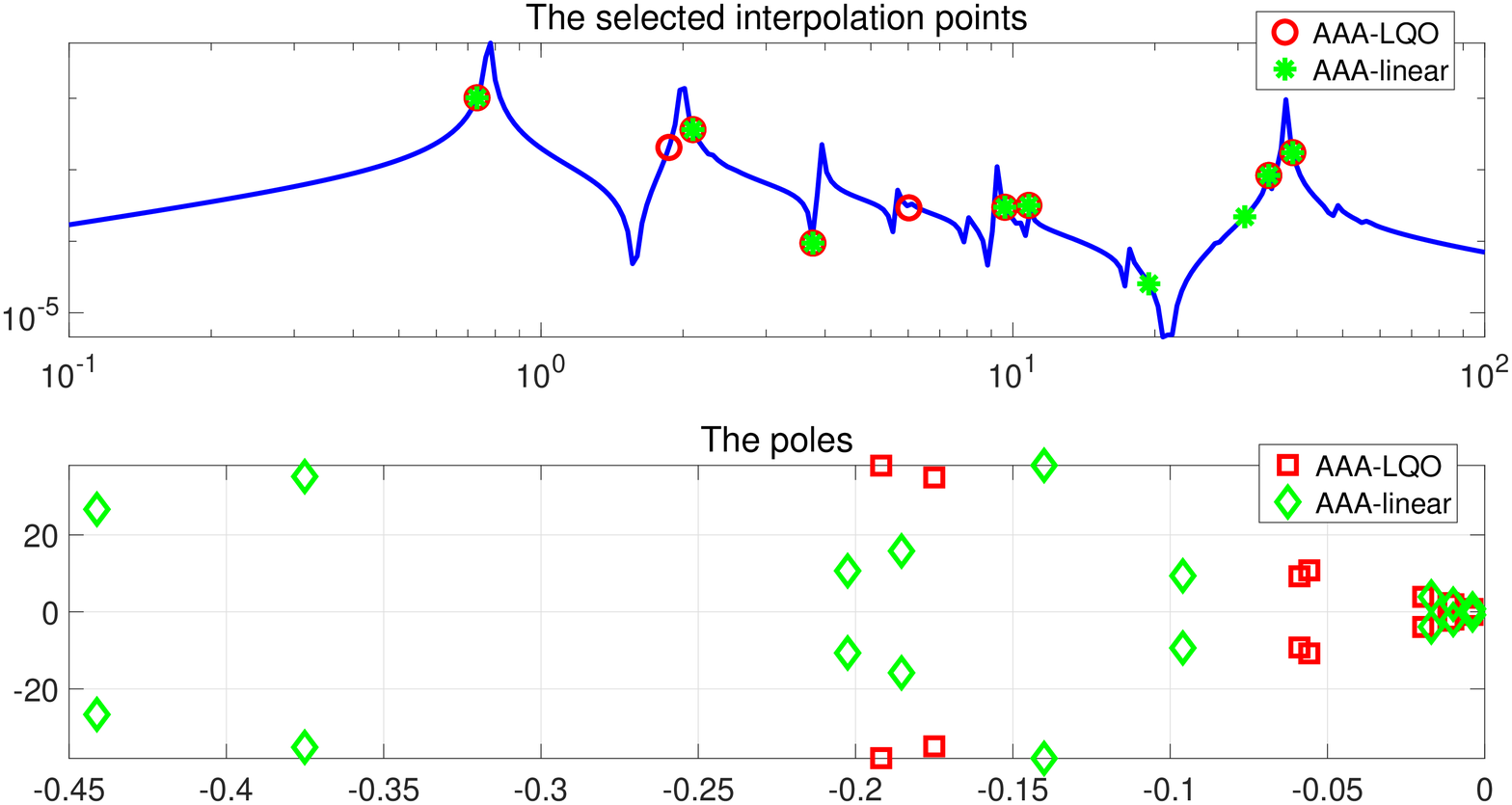}
	\vspace{-4mm}
	\caption{The support points (top) and the poles (bottom) for the two \aaa reduced-order models.}
	\label{fig:3}
	\vspace{-2mm}
\end{figure}

To show the overall performance of \lqoaaa in accurately approximating not only
$H_1(s)$ but also $H_2(s,z)$ (the full \lqo behavior), we perform a time-domain simulation of  the original \lqo system $\Sigma_{\lqo}$, the data-driven \lqoaaa model $\widehat{\Sigma}_{\lqo}$, and the linear \aaa model  by using 
$u(t) = 0.5 \cos(4 \pi t)$ as the control input. During the simulation of the original system $\Sigma_{\lqo}$, we also compute  only the linear part of the output, which the \aaa model should approximate well. The results are given in the top plot of \Cref{fig:4}. The first observation is that the output of $\widehat{\Sigma}_{\lqo}$ from \lqoaaa accurately replicates the output of $\Sigma_{\lqo}$. On the other hand, the linear \aaa model completely misses the quadratic output and is only able to approximate the linear component in the output, as expected. The approximation error in the output corresponding to $\widehat{\Sigma}_{\lqo}$ is depicted in the bottom plot of  \Cref{fig:4}.

\begin{figure}[h]
	\hspace{-6mm}
		\includegraphics[scale=0.24]{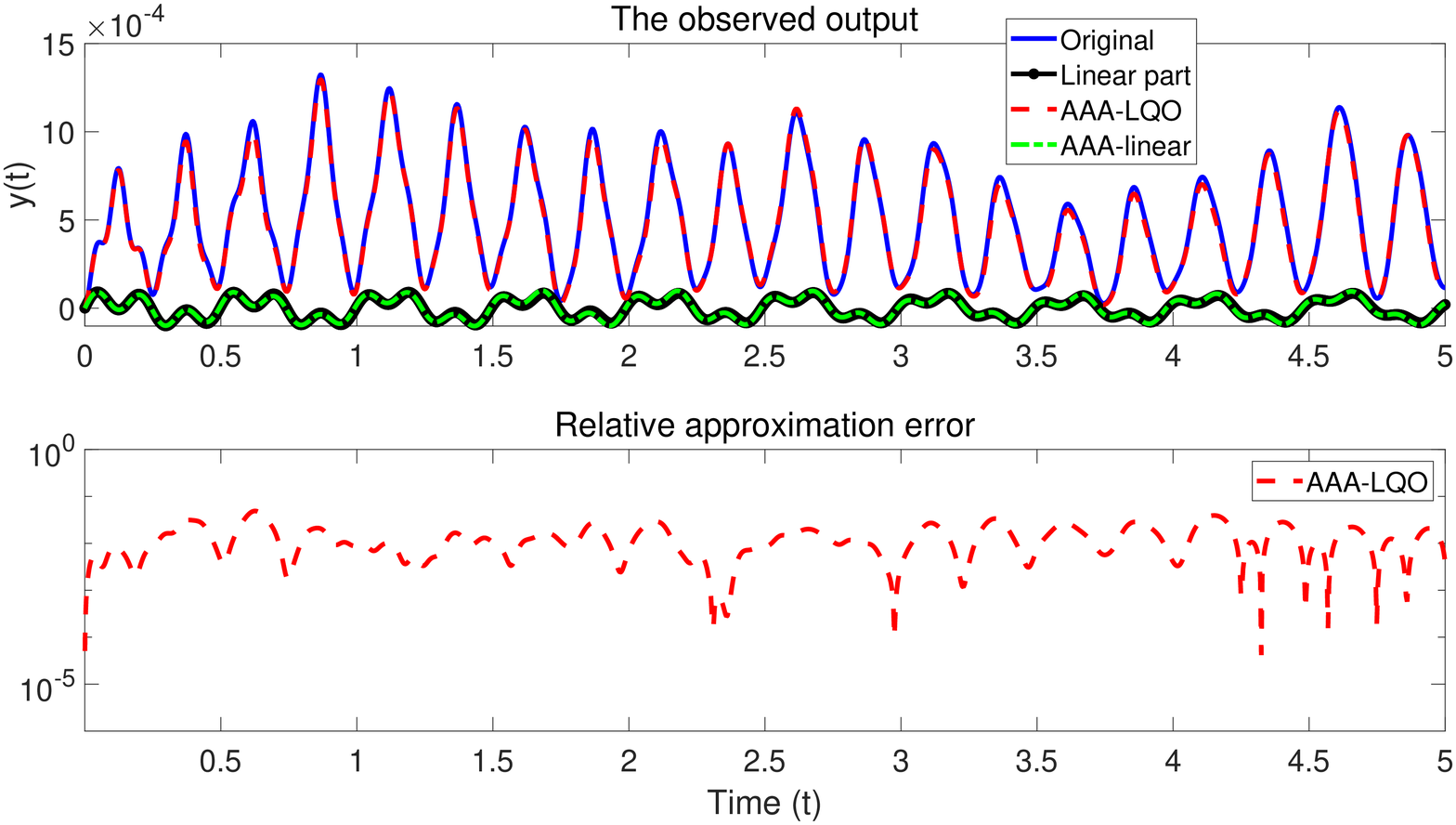}
	\vspace{-4mm}
	\caption{Time-domain simulations: (top) output of the original and data-driven models, (bottom) approximation error.}
	\label{fig:4}
	\vspace{-2mm}
\end{figure}

In \Cref{fig:5} we show the convergence behavior of \lqoaaa by plotting the evolution of the relative approximation errors ($\epsilon_1^{(n)}/M_1$ and $\epsilon_2^{(n)}/M_2)$ for all even values of $n$. For a reference, we also depict the convergence behavior of \aaa. The figure illustrates that after $n=18$, both relative errors fall below the given tolerance of $10^{-2}$ and the algorithm terminates.

\begin{figure}[h]
	\hspace{-6mm}
			\includegraphics[scale=0.24]{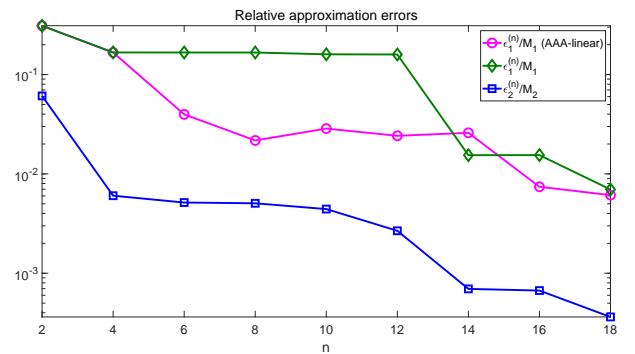}
	\vspace{-4mm}
	\caption{Relative approximation errors in each step.}
	\label{fig:5}
	\vspace{-2mm}
\end{figure}

To investigate how the order of the \lqoaaa model varies based on the stopping tolerance, we set $n_{\textrm max} = 100$ and run 
\lqoaaa for four tolerance  values $\tau = 10^{-2}$,
$\tau = 10^{-3}$, $\tau = 10^{-4}$, and $\tau = 10^{-5}$.
The results are displayed in \Cref{tab:1}. 
For the case of $\tau = 10^{-5}$,
in \Cref{fig:55} we depict the 
convergence behavior of \lqoaaa by plotting
$\epsilon_1^{(n)}/M_1$ and $\epsilon_2^{(n)}/M_2$ during the iteration.

\begin{table}[hh] 	
	\begin{center}
		\begin{tabular}{||c | c | c | c||} 
			\hline
			$\tau = 10^{-2}$ &  $\tau = 10^{-3}$ &  $\tau = 10^{-4}$ &  $\tau = 10^{-5}$ \\ [0.5ex] 
			\hline\hline
			$n=18$ &  $n=28$ &  $n=56$ &  $n=62$ \\ 
			\hline
		\end{tabular}
		\caption{Tolerance values $\tau$ vs. the order $n$}
		\label{tab:1}
	\end{center}
\end{table}

\begin{figure}[h]
	\hspace{-6mm}
			\includegraphics[scale=0.24]{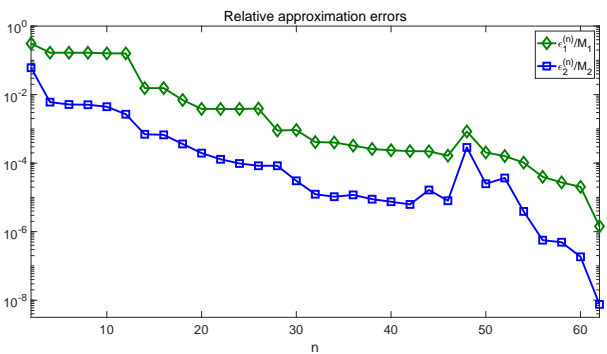}
	\vspace{-4mm}
	\caption{Relative approximation errors in each step.}
	\label{fig:55}
	\vspace{-2mm}
\end{figure}

\subsection{Example 2}\label{sec:galerkin_ex}

This model taken from \cite{PN19} corresponds to an \lqo system whose output measures a variance in the state-variable. A linear mass-spring-damper SISO dynamical system was modified in \cite{Pulch18} by means of stochastic modeling, by replacing the physical parameters by independent random variables, yielding a linear dynamical system with multiple outputs. Based on this multiple output system, a SISO \lqo system was derived in \cite{PN19} where the output corresponds to the variance of tne original output (and thus is quadratic in nature). We refer the reader to \cite{PN19} for further details. We obtain the measurements from a version of this model corresponding to an underlying \lqo system of order $\cN = 960$.

The main difference from the previous example is that in this model the observed output does not have a linear component and depends on the state variable solely quadratically, i.e., $\bc = \bfz$ 
in \cref{eq:def_LQO}. Hence, $H_1(s) = 0, \ \forall s$.

As sampling points $\{s_i\}$, we choose $60$ logarithmically spaced points over the interval $[10^{-1},10^1]\iop$ together with its conjugate pairs, leading to $N_s = 120$ samples. Since $H_1(s) =0$, we only need to sample $H_2(s_i,s_j)$ for $i,j=1,2,\ldots,N_s$. 
The corresponding data for the second transfer function are depicted in \Cref{fig:6}.

\begin{figure}[h]
	\hspace{-6mm}	
		\includegraphics[scale=0.235]{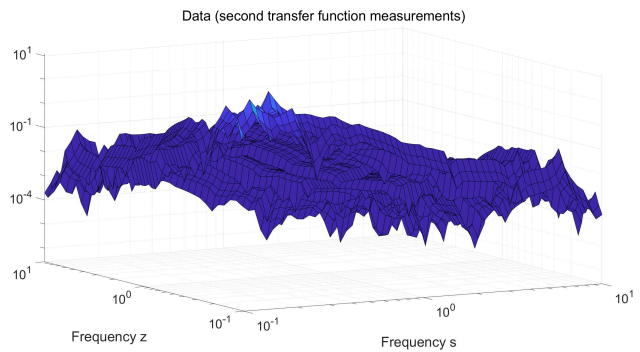}
	\vspace{-4mm}
	\caption{Measurements of the second transfer function.}	\label{fig:6}
	\vspace{-2mm}
\end{figure}

We apply \lqoaaa with $n_{\textrm max}=50$ 
and $\epsilon = 10^{-3}$ (relative stopping criterion), obtaining a \lqo model of order $n=30$. To show the accuracy of the approximant, we perform time-domain simulations of the full  model and the approximant with the input $u(t) =  \sin(0.2  t)$. We depict the observed outputs in the top plot  of \Cref{fig:7}, illustrating 
an accurate approximation. The corresponding output error is plotted in the bottom plot of \Cref{fig:7}.

\begin{figure}[h]
	\hspace{-6mm}		\includegraphics[scale=0.24]{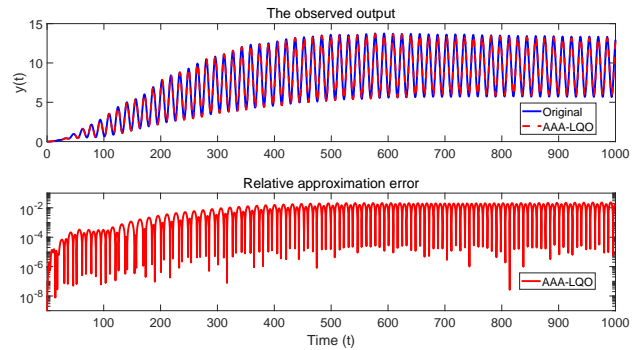}
	\vspace{-4mm}
	\caption{Time-domain simulations; output of the original and the reduced system (up) + approximation error (down).}
	\label{fig:7}
	\vspace{-2mm}
\end{figure}

Finally, in \Cref{fig:8} we show the convergence behavior 
of \lqoaaa by plotting the evolution of 
relative approximation error $\epsilon_2^{(n)}/M_2$.

\begin{figure}[h]
	\hspace{-6mm}			\includegraphics[scale=0.24]{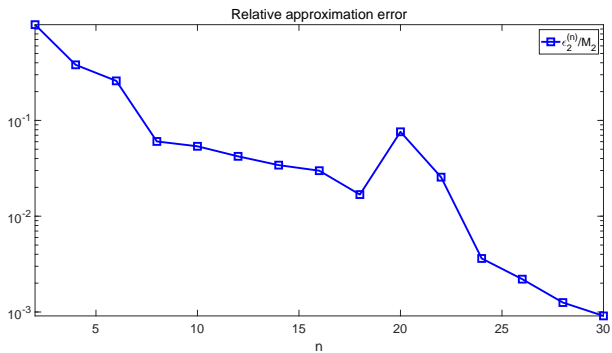}
	\vspace{-4mm}
	\caption{Maximum relative approximation error at each step.}
	\label{fig:8}
	\vspace{-2mm}
\end{figure}

\begin{remark}
	Since \lqoaaa uses a greedy selection scheme and is not a descent algorithm, there is no theoretical guarantee that the maximum approximation error will decrease monotonically.
	This can be seen in \Cref{fig:5,fig:55,fig:8}.  This behavior was also observed in the original \aaa algorithm; see, e.g, Application 6.3 in \cite{NST18}. However, numerically
	the error indeed decreases monotonically with $n$ in most cases.
\end{remark}

\section{Conclusions}
\label{sec:conc}

We have proposed a novel data-driven modeling method, called \lqoaaa,  for linear systems with quadratic outputs (\lqo). 
\lqoaaa extends the \aaa algorithm to this new setting by first developing the barycentric representation theory for the two transfer functions arising in the analysis of \lqo and  then 
formulating a \ls minimization framework to efficiently solve for the barycentric coefficients. 
The two numerical examples illustrate that \lqoaaa provides high-fidelity data-driven approximants to the original model.

The barycentric form we developed here for \lqo systems offers promising research directions    
for modelling systems with general polynomial observation maps, as well as for nonlinearities appearing in the dynamical equation such as bilinear or quadratic-bilinear systems. These topics are the focus of on-going research.

\section{Acknowledgements}
The authors would like to thank Dr. Akil Narayan for providing the source codes for generating the numerical example presented in \Cref{sec:galerkin_ex}.\\
S. Gugercin was supported in parts by National Science Foundation under Grant No. 
DMS-1720257 and DMS-1819110.
Part of this material is  based upon work supported by the National Science 
Foundation under Grant No. DMS-1439786 and by the Simons Foundation Grant No. 
507536  while Gugercin was in residence at the 
Institute for Computational and Experimental Research in Mathematics in 
Providence, RI, during the \textquotedblleft{}Model and dimension reduction in 
uncertain and dynamic systems\textquotedblright{} program.

\addcontentsline{toc}{section}{References}
\bibliographystyle{plainurl}
\bibliography{LQO_AAA_ref}             
\appendix
\section{Proof of Lemma \ref{Lemma2} } \label{sec:appendix}

Substitute $s = \s_i$ and $z = \sh_j$ into (\ref{formulae_n2_d2}) to obtain
\begin{align*}
n_2(\s_i,\sh_j)
&= \displaystyle \sum_{k=1}^n \sum_{\ell=1}^n  h_{k,\ell}w_k w_\ell P_{k,\ell}(\s_i,\sh_j) \\
&= \displaystyle  \sum_{\ell=1}^n  h_{i,\ell}w_i w_\ell P_{i,\ell}(\s_i,\sh_j).
\end{align*}
and
\begin{align*}
 d_2(\s_i,\sh_j) &= P(\s_i,\sh_j)+\displaystyle \sum_{k=1}^n w_k p_k(\s_i) p(\sh_j) \\ &+ \displaystyle \sum_{\ell=1}^n w_\ell p_\ell(\sh_j) p(\s_i) +\displaystyle \sum_{k=1}^n \sum_{\ell=1}^n w_k w_\ell P_{k,\ell}(\s_i,\sh_j) \\
&= w_i p_i(\s_i) p(\sh_j)+\sum_{\ell=1}^n  w_i w_\ell P_{i,\ell}(\s_i,\sh_j),
\end{align*}
where $P,p_k$, and $P_{k,l}$ are as given in \eqref{eq:definepij}.
Hence, write $ r_2(\s_i,\sh_j) = \frac{n_2(\s_i,\sh_j)}{d_2(\s_i,\sh_j)}$  as
\begin{align}
\begin{split}
r_2(\s_i,\sh_j) &= \frac{\displaystyle  \sum_{\ell=1}^n  h_{i,\ell}w_i w_\ell P_{i,\ell}(\s_i,\sh_j)}{w_i p_i(\s_i) p(\sh_j)+\displaystyle \sum_{\ell=1}^n  w_i w_\ell P_{i,\ell}(\s_i,\sh_j)}.
\end{split}
\end{align}
Introduce the notation
\begin{equation}
P^L_{i}(\s_i,z) = \prod_{k=1, k \neq i}^n \prod_{\ell=1}^n (\xi_i-\xi_k) (z-\xi_\ell) =  p_i(\s_i) p(z).
\end{equation}
Since $P^L_{i}(\s_i,\sh_j) = P_{i,\ell}(\s_i,\sh_j) (\sh_j-\s_\ell)$ holds, 
we can write 
\begin{align}
\begin{split}
r_2(\s_i,\sh_j) &= \frac{\displaystyle  \sum_{\ell=1}^n  h_{i,\ell}w_i w_\ell \frac{P^L_{i}(\s_i,\sh_j)}{\sh_j-\s_\ell}}{w_i P^L_{i}(\s_i,\sh_j)+\displaystyle \sum_{\ell=1}^n  w_i w_\ell \frac{P^L_{i}(\s_i,\sh_j)}{\sh_j-\s_\ell}}.
\end{split}
\end{align}
By simplifying $w_i P^L_{i}(\s_i,\sh_j)$ from both the numerator and the denominator in the above expression proves the first desired result in \eqref{eval_r2_s_sh}. The proof for $r_2(\sh_j,\s_i)$ follows similarly.

\end{document}